\numberwithin{equation}{section}
\def\be{\begin{equation}}
\def\ee{\end{equation}}
\def\bea{\begin{eqnarray}}
\def\eea{\end{eqnarray}}
\def\eps{\varepsilon}
\def\e{\varepsilon}
\def\R{{\mathbb R}}
\def\T{{\mathbb T}}
\newcommand{\br}{\mathbb{R}}
\newcommand{\bz}{\mathbb{Z}}
\newcommand{\bt}{\mathbb{T}}
\newcommand{\W}{KW}
\renewcommand{\(}{\left(}
\renewcommand{\)}{\right)}
\renewcommand{\[}{\left[}
\renewcommand{\]}{\right]}
\newcommand{\na}{\nabla}
\def\bal{\begin{aligned}}
\def\eal{\end{aligned}}
\def\proofof#1{\begin{proof}[Proof of #1]}
\def\part#1#2{\par\noindent{\underline{\it Part~#1.}}\emph{ #2}\\}
\def\pt{\partial}
\def\({\left(}
\def\){\right)}
\def\[{\left[}
\def\]{\right]}
\def\na{\nabla}
\def\W{W}
\newcommand*\di{\mathop{}\!\mathrm{d}}
\def\XXint#1#2#3{{\setbox0=\hbox{$#1{#2#3}{\int}$} \vcenter{\vspace{-1pt}\hbox{$#2#3$}}\kern-.5\wd0}}
\theoremstyle{plain}
\newtheorem{lemma}{Lemma}[section]
\newtheorem{theorem}[lemma]{Theorem}
\newtheorem{corol}[lemma]{Corollary}
\newtheorem{defin}[lemma]{Definition}
\newtheorem{remark}[lemma]{Remark}
\def\de0#1{\rule[3pt]{#1}{0.4pt} \hspace{-0.1pt} \rule[3.05pt]{0.05pt}{0.4pt} \hspace{-0.1pt} \rule[3.1pt]{0.05pt}{0.4pt} \hspace{-0.1pt} \rule[3.15pt]{0.05pt}{0.4pt} \hspace{-0.1pt} \rule[3.2pt]{0.05pt}{0.4pt} \hspace{-0.1pt} \rule[3.25pt]{0.05pt}{0.4pt} \hspace{-0.1pt} \rule[3.3pt]{0.05pt}{0.4pt} \hspace{-0.1pt} \rule[3.35pt]{0.05pt}{0.4pt} \hspace{-0.1pt} \rule[3.4pt]{0.05pt}{0.4pt} \hspace{-0.1pt} \rule[3.45pt]{0.05pt}{0.4pt} \hspace{-0.1pt} \rule[3.5pt]{0.05pt}{0.4pt} \hspace{-0.1pt} \rule[3.55pt]{0.05pt}{0.4pt} \hspace{-0.1pt} \rule[3.6pt]{0.05pt}{0.4pt} \hspace{-0.1pt} \rule[3.65pt]{0.05pt}{0.4pt} \hspace{-0.1pt} \rule[3.7pt]{0.05pt}{0.4pt} \hspace{-0.1pt} \rule[3.75pt]{0.05pt}{0.4pt} \hspace{-0.1pt} \rule[3.8pt]{0.05pt}{0.4pt} \hspace{-0.1pt} \rule[3.85pt]{0.05pt}{0.4pt} \hspace{-0.1pt} \rule[3.9pt]{0.05pt}{0.4pt} \hspace{-0.1pt} \rule[3.95pt]{0.05pt}{0.4pt} \hspace{-0.1pt} \rule[4.0pt]{0.05pt}{0.4pt} \hspace{-0.1pt} \rule[4.05pt]{0.05pt}{0.4pt} \hspace{-0.1pt} \rule[4.1pt]{0.05pt}{0.4pt} \hspace{-0.1pt} \rule[4.15pt]{0.05pt}{0.4pt} \hspace{-0.1pt} \rule[4.2pt]{0.05pt}{0.4pt} \hspace{-0.1pt} \rule[4.25pt]{0.05pt}{0.4pt} \hspace{-0.1pt} \rule[4.3pt]{0.05pt}{0.4pt} \hspace{-0.1pt} \rule[4.35pt]{0.05pt}{0.4pt} \hspace{-0.1pt} \rule[4.4pt]{0.05pt}{0.4pt} \hspace{-0.1pt} \rule[4.45pt]{0.05pt}{0.4pt} \hspace{-0.1pt} \rule[4.5pt]{0.05pt}{0.4pt} \hspace{-0.1pt} \rule[4.55pt]{0.05pt}{0.4pt} \hspace{-0.1pt} \rule[4.6pt]{0.05pt}{0.4pt} \hspace{-0.1pt} \rule[4.65pt]{0.05pt}{0.4pt} \hspace{-0.1pt} \rule[4.7pt]{0.05pt}{0.4pt} \hspace{-0.1pt} \rule[4.75pt]{0.05pt}{0.4pt} \hspace{-0.1pt} \rule[4.8pt]{0.05pt}{0.4pt} \hspace{-0.1pt} \rule[4.85pt]{0.05pt}{0.4pt} \hspace{-0.1pt} \rule[4.9pt]{0.05pt}{0.4pt} \hspace{-0.1pt} \rule[4.95pt]{0.05pt}{0.4pt} \hspace{-0.1pt} \rule[5.0pt]{0.05pt}{0.4pt} \hspace{-0.1pt} \rule[5.05pt]{0.05pt}{0.4pt} \hspace{-0.1pt} \rule[5.1pt]{0.05pt}{0.4pt} \hspace{-0.1pt} \rule[5.15pt]{0.05pt}{0.4pt} \hspace{-0.1pt} \rule[5.2pt]{0.05pt}{0.4pt} \hspace{-0.1pt} \rule[5.25pt]{0.05pt}{0.4pt} \hspace{-0.1pt} \rule[5.3pt]{0.05pt}{0.4pt} \hspace{-0.1pt} \rule[5.35pt]{0.05pt}{0.4pt} \hspace{-0.1pt} \rule[5.4pt]{0.05pt}{0.4pt} \hspace{-0.1pt} \rule[5.45pt]{0.05pt}{0.4pt} \hspace{-0.1pt} \rule[5.5pt]{0.05pt}{0.4pt} \hspace{-0.1pt} \rule[5.55pt]{0.05pt}{0.4pt} \hspace{-0.1pt} \rule[5.6pt]{0.05pt}{0.4pt} \hspace{-0.1pt} \rule[5.65pt]{0.05pt}{0.4pt} \hspace{-0.1pt} \rule[5.7pt]{0.05pt}{0.4pt} \hspace{-0.1pt} \rule[5.75pt]{0.05pt}{0.4pt} \hspace{-0.1pt} \rule[5.8pt]{0.05pt}{0.4pt} \hspace{-0.1pt} \rule[5.85pt]{0.05pt}{0.4pt} \hspace{-0.1pt} \rule[5.9pt]{0.05pt}{0.4pt} \hspace{-0.1pt} \rule[5.95pt]{0.05pt}{0.4pt} \hspace{-0.1pt} \rule[6.0pt]{0.05pt}{0.4pt}}	
\newcounter{mt}
\begin{document}

\title[A new perspective on Wasserstein distances for kinetic problems]{A new perspective on Wasserstein distances\\ for kinetic problems}

\author{Mikaela Iacobelli}
\thanks{ETH Z\"urich, Department of Mathematics, R\"amistrasse 101, 8092 Z\"urich, Switzerland.\newline Email: \textsf{mikaela.iacobelli@math.ethz.ch}}

\begin{abstract}
We introduce a new class of Wasserstein-type distances specifically designed to tackle questions concerning stability and convergence to equilibria for kinetic equations.
Thanks to these new distances, we improve some classical estimates by Loeper \cite{Loep} and Dobrushin \cite{Dob} on Vlasov-type equations, and we present an application to quasi-neutral limits.
\end{abstract}

\maketitle

\section{Introduction}

\subsection{General overview}
Monge-Kantorovich distances, also known as Wasserstein distances, play a central role in statistical mechanics, especially in the theory of propagation of chaos and studying large particle systems' mean behavior.
From the late 1970s, there have been many applications of Wasserstein distances in kinetic theory, as is beautifully described in the bibliographical notes of \cite[Chapter 6]{Vil09}.
In particular, these distances are frequently used to prove the uniqueness and stability of solutions to kinetic equations, study singular limits, and measure convergence to equilibrium.

The first celebrated result relying on Monge-Kantorovich-Wasserstein distances in non-collisional kinetic theory is the proof by Dobrushin \cite{Dob} on the well-posedness for Vlasov equations with $C^{1,1}$ potentials, where existence, uniqueness, and stability are proved via a fixed point argument in the bounded-Lipschitz or the $1$-Wasserstein distance.
As a consequence of this argument, one also obtains the validity of the mean-field limit for Vlasov equations with smooth potentials.
The interested reader may refer to \cite[Chapter 1.4]{GolseReview2016} and \cite[Chapter 3.3]{JabinReviewMFL} for a detailed explanation of Dobrushin's stability estimate, its consequences on the mean-field limit for the Vlasov equation, and of the role of Monge-Kantorovich-Wasserstein distances.
Dobrushin's estimate is at the core of several kinetic theory arguments; see for example \cite{CanizoCarrilloRosado2011,CagliotiRousset2008, CarrilloChoiHauray2014, CarrilloChoiSalem2019, DeBievreGoudonVavasseur2016, Xavipatit2018, GolseMouhotRicci2013, GPI-WP} for some applications.

In recent times, Golse and Paul in \cite{GolsePaul2017} introduced a quantum analog of the $2$-Wasserstein distance to measure the approximation of the $N$-body quantum dynamics by its mean-field limit. 
In \cite{GolseMouhotPaul2016} the authors prove quantitative stability estimates that are reminiscent of Dobrushin's, and they show that, in the case of $C^{1,1}$ potentials, the mean-field limit of the quantum mechanics of $N$ identical particles is uniform in the classical limit.

Another fundamental stability estimate was proved by Loeper \cite{Loep}, who established uniqueness and stability of solutions with bounded density for the Vlasov-Poisson equation. Loeper's argument relies on the fact that the Coulomb kernel is generated by a potential solving Poisson's equation and exploits the strong connection between the $2$-Wasserstein distance and the $H^{-1}$-norm. Besides providing the best-known uniqueness criterion for Vlasov-Poisson, this approach also gives a new proof of uniqueness \`a la Yudovich for $2D$ Euler. Loeper's result has been generalized to less singular kernels \cite{HaurayWass2009}, 
and it is the cornerstone for several other stability arguments \cite{BertozziLaurentRosado2011,CarrilloRosado2010, Han-KwanMiotMoussaMoyano, Lafleche2019, LewinSabin2020, LoeperSIMA2006, SerfatyVazquez2014}. 
Also, Loeper's uniqueness criterion for Vlasov-Poisson has been extended to solutions whose associated density belongs to some suitable Orlicz spaces \cite{MiotCMP2016, HoldingMiot2018}.
In the following, we will focus our attention on some applications of Loeper's stability estimate related to quasi-neutral limit for the Vlasov-Poisson equation \cite{GPI-SIAM2018, GPI20, HKI2, HKI1}.

In general, extending Dobrushin's and Loeper's estimates is a delicate matter. A possible idea is to introduce an anisotropic metric that weights spatial and momentum coordinates differently. For example, in \cite{Laz}, the author considers a variant of the $2$-Wasserstein distance where the cost for moving points in the $x$-variable is higher than for the $v$-variable.
By suitably selecting the parameters, this allows the author to extend the validity ranges for the mean-field limit for the Vlasov-Poisson system.
Also, as shown in \cite{GPI-SIAM2018, GPI20}, an analogous method provides better convergence estimates when considering combined mean-field and quasi-neutral limits in Vlasov-Poisson-type systems.
At the same time as this paper was written, another variant of this idea was introduced in \cite{Salem2021}, where the author improves the trend to equilibrium for $1$-D kinetic Fokker-Planck equations via estimates measured in an analog of the $2$-Wasserstein metric.

This work aims to push further the idea that, when applied to kinetic problems, Wasserstein distances should be modified to reflect the natural anisotropy between position and momentum variables. Moreover, since these metrics are used to measure the distance between PDEs' solutions, we will introduce time-dependent counterparts that can vary along with the characteristic flow. 
Still, it is worth noticing that our method could be applied, beyond the kinetic framework, to equations where the evolution in one of the variables enjoys better regularity properties than the others.

\smallskip

Before stating our main results, let us emphasize that the idea of finding appropriate generalised Wasserstein distances has been used successfully in other contexts in the optimal transport and evolution PDE community, see for instance \cite{DolbeaultNazaretSavare2009, FigalliGigli2010, LieroMielkeSavare2016, LieroMielkeSavare2018,PiccoliRossi2014, PiccoliRossi2016} and references therein.

\subsection{Definitions and main results}

Let us recall the definition of Wasserstein distances (see for instance \cite{AmbrosioGigliSavareBOOK, Vil09}).  
In what follows, $\mathcal X$ will be either the $d$-dimensional torus $\mathbb T^d$ or the Euclidean space $\br^d$.

\begin{defin}\label{def:Wp} Let $\mu, \nu$ be two probability measures on $\mathcal X\times \br^d$.
We denote with $\Pi(\mu, \nu)$ the set of all probability measures on $(\mathcal X\times \br^d)^2$ with \emph{marginals} $\mu$ and $\nu.$   More precisely, $\pi \in \Pi(\mu, \nu)$ if 
$$
\pi[A \times (\mathcal X\times \br^d)]=\mu[A], \ \ \ \pi[(\mathcal X\times \br^d) \times B]= \nu[B], \qquad \text{for all $A,B\subset \mathcal X\times \br^d$ Borel.} 
$$
We shall call \emph{coupling} (between $\mu$ and $\nu$) an element in $\Pi(\mu, \nu).$

For $p\geq 1$, the $p$-Wasserstein distance between $\mu$ and $\nu$ is defined as
$$
W_{p} (\mu, \nu):=\left( \inf_{\gamma \in \Pi(\mu,\nu)} \int_{(\mathcal{X} \times \mathbb{R}^d)^2} \(|x-y|^{p} +|v-w|^p\)\mathrm{d} \gamma (x,v,y,w) \right)^{1/p}.
$$
\end{defin}

\noindent {\bf (1) A free-flow $W_1$-type distance for the Vlasov equations with $C^{1,1}$ potential.}
Consider two solutions $f_1,f_2$ of the Vlasov equation on $\mathcal X$, namely
$$
\partial_t f+v\cdot \nabla_x f+F[f]\cdot \nabla_vf=0,\qquad F[f]:=\nabla K\ast \rho_f,\qquad \rho_f:=\int f dv,
$$
where $\|D^2 K\|_{\infty}=:B<\infty$.
The classical Dobrushin's argument shows that
$$
W_1(f_1(t),f_2(t))\leq e^{(1+2B)t}W_1(f_1(0),f_2(0)).
$$
In particular, when the potential $K$ is identically zero, this bound provides an exponential stability for $W_1$ that is far from optimal. Indeed, since the solution is simply given by $f(t,x,v)=f(0,x-tv,v)$,
it is clear that in this case $W_1(f_1(t),f_2(t))\sim t$ for $t \gg 1$.

By introducing a $W_1$-type distance adapted to the free flow, we can prove that
$$
W_1(f_1(t),f_2(t))\leq \min\left\{(1+t) e^{\frac23 B\left((1+t)^3-1\right)},e^{(1+2B)t}\right\}W_1(f_1(0),f_2(0))
$$
(see Theorem~\ref{thm:Dobrushin Mika} below).

This estimate gives the optimal bound when $K\equiv 0$. Moreover, for $B\leq 1$, this provides a better estimate compared to the usual Dobrushin's bound when $t \in [0,T_B]$ with $T_B\simeq B^{-1/2}$. 

\noindent {\bf (2) An improved $W_2$-stability estimate for Vlasov-Poisson with bounded density.}
For this second application, we focus on the case of the torus for simplicity, but a completely similar analysis works on the whole space.

Consider two solutions $f_1,f_2$ of the Vlasov-Poisson equation on $\mathbb T^d$, namely
$$
\partial_t f+v\cdot \nabla_x f+\nabla U\cdot \nabla_vf=0,\qquad -\Delta U:= \rho_f-1,\qquad \rho_f:=\int f dv.
$$
As shown in \cite{Loep}, Loeper's proof provides the following stability estimate whenever $W_2(f_1(0),f_2(0))$ is sufficiently small (which is the interesting case):
$$
W_2(f_1(t),f_2(t))\leq c_d e^{\log\left(\frac{W_2(f_1(0),f_2(0))}{c_d}\right) e^{-Ct}},
$$
where $c_d>0$ is a dimensional constant, while $C$ depends on the $L^\infty$ norm of $\rho_{f_1}$ and $\rho_{f_2}.$
This estimate can then be applied to prove the validity of the quasi-neutral limit for Vlasov-Poisson
for initial data that are double-exponential perturbation of analytic functions \cite{HKI2, HKI1} (see Remark~\ref{rem:double}).

To improve this result, given $(X_i,V_i)$ the characteristics associated to $f_i$,
we consider a {\em nonlinear $W_2$-type quantity} of the form
$$
Q(t):=\int_{(\bt^d\times\br^d)^2} \[\lambda(t)|X_1(t,x,v)-X_2(t,y,w)|^2+|V_1(t,x,v)-V_2(t,y,w)|^2\]d\pi_0(x,v,y,w)
$$
where $\pi_0$ is an optimal coupling, and $\lambda(t)=|\log(Q(t))|.$
We then prove that $Q(t)$ is well-defined whenever $Q(t)\ll 1,$  and finally, comparing $Q(t)$ to $W_2,$ we show that
$$
W_2(f_1(t),f_2(t))^2 \leq 2 e^{-\left(\sqrt{\left|\log\left\{W_2(f_1(0),f_2(0))^2 \left| \log \(\frac{1}2 W_2(f_1(0),f_2(0))^2\)\right|\right\}\right|} -Ct \right)^2}. 
$$
(see Theorem~\ref{thm:Loeper}).
To better understand the improvement of our estimate with respect to Loeper's, one can think as follows:
if $W_2(f_1(0),f_2(0)= \theta \ll 1$, then Loeper's estimate implies that $W_2(f_1(t),f_2(t)) \lesssim 1$ for $t \in [0,\log|\log\theta|]$.
Instead, our bound gives $W_2(f_1(t),f_2(t)) \lesssim 1$ for $t \in \bigl[0,|\log\theta|^{1/2}\bigr]$, so on a much longer time-interval.

\begin{remark}
\begin{itemize}
\item
Note that a standard Gronwall estimate of the form $W_2(f_1(t),f_2(t))\leq e^{Ct}W_2(f_1(0),f_2(0))$ would imply that $W_2(f_1(t),f_2(t)) \lesssim 1$ for $t \in [0,|\log\theta|]$.
So, while Loeper's bound loses an extra logarithm in terms of time-scale, our bound only loses a square root. Since the electric field for a solution with bounded density is at most log-Lipschitz, an estimate of the form $W_2(f_1(t),f_2(t))\leq e^{Ct}W_2(f_1(0),f_2(0))$ 
is not expected to hold in this setting, and we believe our bound to be essentially sharp.
\item
Our improvement from $\log|\log\theta|$ to $|\log\theta|^{1/2}$ is similar to the one obtained for the $W_1$ distance, see \cite[Remark 1.7]{HoldingMiot2018}.
In that paper, the authors rely crucially on the second-order structure of the Vlasov equation, namely $\ddot X=\nabla U(t,X)$.
Our proof, instead, relies only on the fact that $\dot X=a(t,X,V)$, where $a(t,\cdot,\cdot)$ is Lipschitz, and it can be generalized to other contexts where the second-order structure fails.
\end{itemize}
\end{remark}

Our new stability estimate has interesting applications for what concerns some singular limits for Vlasov-type equations.
In particular, by considering the Vlasov-Poisson system in appropriate dimensionless variables that take the Debye length into account, we prove the validity of the quasi-neutral limit for Vlasov-Poisson
for initial data that are an exponential perturbation of analytic functions, see also Remark~\ref{rem:double}.

\bigskip

The paper is structured as follows: in the next two sections, we will present our two main results, and then in the final section of the paper, we will discuss more generally our approach and how it leads to the introduction of a new family of Wasserstein-type distances.

\section{Dobrushin's estimate revisited}
\subsection{The Vlasov equation}
The Vlasov equation is a non-linear partial differential equation providing a statistical description for the collective behavior of large numbers of charged particles in mutual, long-range interaction. 
This model was first introduced by Jeans in the context of Newtonian stellar dynamics \cite{Jeans}, and later by Vlasov in his work on plasma physics \cite{Vla1, Vla2}. 
The unknown of the Vlasov equation $f(t,x,v)$ is the distribution function of the system at time, that is the number density of particles that are located at the position $x$ and have instantaneous velocity $v$ at time $t.$
The Vlasov equation for the distribution function $f$ reads as follows:

\begin{equation}\label{vlasov}
\partial_t f(t,x,v)+v\cdot \nabla_xf(t,x,v) +F[f](t,x) \cdot \nabla_v f(t,x,v)=0
\end{equation}
where
$$
F[f](x)= \iint \nabla K(x-y)\,f(dy,dw)=\na K\ast_{x,v} f.
$$
In other words, the Vlasov equation for particle systems is a kinetic model where each particle is subject to the acceleration field $F[f]$ created by all the other particles in the system.

The Vlasov equation is a transport equation and, for a sufficiently regular force field, it can be described by the method of characteristics. The initial distribution $f_0$ is transported by a characteristic flow $(X,V)$ generated by the mean-field force $F[f]$: if we denote 
$$
\left\{
\begin{array}{l}
\dot X(t,x,v)=V(t,x,v),\\
\dot V(t,x,v)=F[f](t,X(t,x,v)),\\
X(0,x,v)=x,\,V(0,x,v)=v,
\end{array}
\right.
$$
then $f(t,X(t,x,v),V(t,x,v))=f(0,x,v)$. Also, since 
 the vector field $(v, F[f])$ is divergence free, one has conservation of mass and of all $L^p$-norms.
For an introduction to this topic we refer to the lecture notes \cite{GolseReview2016}.


\subsection{An improved Dobrushin's estimate}
Consider the Vlasov equation with smooth kernel. More precisely,
\be
\label{eq:Vlasov K}
\partial_t f+v\cdot \nabla_x f+F[f]\cdot \nabla_vf=0,\qquad F[f]:=\nabla K\ast \rho_f,\qquad \rho_f:=\int f dv,
\ee
where $\|D^2 K\|_{\infty}=:B<\infty$.
As explained in the introduction, our goal is to provide a stability estimate for solutions that is optimal in the regime as $B$ tends to zero.
Here is our result:
\begin{theorem}
\label{thm:Dobrushin Mika}
Let $f_1,f_2$ be two solution of \eqref{eq:Vlasov K}. Then
$$
W_1(f_1(t),f_2(t))\leq \min\left\{(1+t) e^{\frac23 B\left((1+t)^3-1\right)},e^{(1+2B)t}\right\}W_1(f_1(0),f_2(0)).
$$
\end{theorem}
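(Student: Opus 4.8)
The plan is to run a Dobrushin-type Gronwall argument, but in a $W_1$-type distance adapted to the \emph{free transport} $(x,v)\mapsto(x-tv,v)$ rather than in $W_1$ itself. For $t\ge 0$ and probability measures $\mu,\nu$ on $\mathcal X\times\br^d$, set
\[
\mathcal W_t(\mu,\nu):=\inf_{\gamma\in\Pi(\mu,\nu)}\int\Big(\big|(x-y)-t(v-w)\big|+|v-w|\Big)\,d\gamma(x,v,y,w),
\]
which is simply $W_1$ computed after pulling back by the free flow. The elementary bounds $|x-y|\le|(x-y)-t(v-w)|+t|v-w|$ and $|(x-y)-t(v-w)|\le|x-y|+t|v-w|$ give $\mathcal W_0=W_1$ and
\[
W_1(\mu,\nu)\le(1+t)\,\mathcal W_t(\mu,\nu)\qquad\text{for all }t\ge0 .
\]
So it suffices to prove $\mathcal W_t(f_1(t),f_2(t))\le e^{\frac23 B((1+t)^3-1)}W_1(f_1(0),f_2(0))$, together with the classical Dobrushin bound $W_1(f_1(t),f_2(t))\le e^{(1+2B)t}W_1(f_1(0),f_2(0))$ (obtained by the very same scheme run in the plain $W_1$ metric: push forward an optimal coupling, use $\dot A\le E$, $\dot E\le 2BA$ where $A=\int|X_1-X_2|$, $E=\int|V_1-V_2|$, add, and apply Gronwall), and then take the minimum.

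For the adapted bound, fix an optimal coupling $\pi_0\in\Pi(f_1(0),f_2(0))$ for $W_1$ and transport it by the characteristic flows, $\pi_t:=(\Phi^t_1\times\Phi^t_2)_\#\pi_0$ with $\Phi^t_i(x,v)=(X_i(t,x,v),V_i(t,x,v))$; since $\pi_t\in\Pi(f_1(t),f_2(t))$ we get $\mathcal W_t(f_1(t),f_2(t))\le\Psi(t)$, where
\[
\Psi(t):=\int\Big(|D|+|\delta V|\Big)\,d\pi_0,\qquad D:=(X_1-X_2)-t(V_1-V_2),\quad \delta V:=V_1-V_2 .
\]
The point of the adapted metric is that $\dot D=-t(\dot V_1-\dot V_2)$: the drift $V_1-V_2$ coming from $\dot X_i=V_i$ is exactly cancelled by the derivative of the correction term $-t(V_1-V_2)$, so $D$ only feels the \emph{force} difference, with a prefactor $t$. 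Hence $\tfrac{d}{dt}|D|\le t\,|\dot V_1-\dot V_2|$ and $\tfrac{d}{dt}|\delta V|\le|\dot V_1-\dot V_2|$, with $\dot V_i=F[f_i](t,X_i)$. Splitting $F[f_1](t,X_1)-F[f_2](t,X_2)$ as $\big(F[f_1](t,X_1)-F[f_1](t,X_2)\big)+\big(F[f_1](t,X_2)-F[f_2](t,X_2)\big)$, bounding the first piece by $B|X_1-X_2|$ (since $\nabla F[f_1]=D^2K\ast\rho_{f_1}$ has sup-norm $\le B$) and, for the second, writing $\rho_{f_i}(t)$ as the push-forward by $X_i$ of a marginal of $\pi_0$ and using that $\nabla K$ is $B$-Lipschitz, one obtains the pointwise estimate $|\dot V_1-\dot V_2|\le B|X_1-X_2|+B\!\int|X_1-X_2|\,d\pi_0$.

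Integrating against $\pi_0$ and writing $A(t):=\int|X_1-X_2|\,d\pi_0$, the two inequalities combine to $\dot\Psi\le 2B(1+t)\,A(t)$; and since $|X_1-X_2|=|D+t\,\delta V|\le|D|+t|\delta V|$ we have $A(t)\le(1+t)\Psi(t)$, whence
\[
\dot\Psi(t)\le 2B(1+t)^2\,\Psi(t),\qquad \Psi(0)=W_1(f_1(0),f_2(0)).
\]
Gronwall's lemma and $\int_0^t 2B(1+s)^2\,ds=\frac23 B((1+t)^3-1)$ give $\Psi(t)\le e^{\frac23 B((1+t)^3-1)}\Psi(0)$, and then $W_1(f_1(t),f_2(t))\le(1+t)\mathcal W_t(f_1(t),f_2(t))\le(1+t)\Psi(t)$ yields the first term of the minimum. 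The main obstacle is not analytic but conceptual/bookkeeping: one must (i) pick precisely the free-transport-adapted metric so that $\dot D$ sheds the bad $O(1)$ term and retains only the $O(t)$ force difference, and (ii) track the two separate factors $(1+t)$ — one from the $t$ in $\dot D$ together with the $|\delta V|$ term, one from controlling $A(t)$ by $(1+t)\Psi(t)$ — which multiply to the $(1+t)^2$ in the differential inequality and hence to the exponent $\frac23 B((1+t)^3-1)$. The background regularity (characteristics well defined and Lipschitz, $\rho_{f_i}$ a probability measure, Lipschitz-in-$x$ force) is exactly the standard $C^{1,1}$ Dobrushin setting and is what legitimizes the push-forwards and the a.e. differentiation of $t\mapsto|D|,|\delta V|$.
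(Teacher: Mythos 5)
Your proposal is correct and follows essentially the same route as the paper: fix an optimal $W_1$-coupling at time $0$, run a Gronwall argument on the free-transport-adapted quantity $\int\big(|(X_1-tV_1)-(X_2-tV_2)|+|V_1-V_2|\big)\,d\pi_0$, bound the force difference by $B|X_1-X_2|$ plus a term controlled through the pushforward coupling of the densities (the paper phrases this last step via Kantorovich duality, you via the coupling directly — the two are equivalent and yield the same bound $B\int|X_1-X_2|\,d\pi_0$), obtaining $Q'\le 2B(1+t)^2Q$, and then convert back to $W_1$ with the factor $(1+t)$ and take the minimum with the classical Dobrushin bound.
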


\begin{proof}
Let $(X_i,V_i)$ denote the characteristic flow associated to $f_i$, that is
$$
\left\{
\begin{array}{l}
\dot X_i(t,x,v)=V_i(t,x,v),\\
\dot V_i(t,x,v)=\nabla \bigl(K\ast \rho_{f_i}\bigr)(t,X_i(t,x,v)),\\
X_i(0,x,v)=x,\,V_i(0,x,v)=v.
\end{array}
\right.
$$
Note that, since $\nabla K$ is Lipschitz, the characteristic flow is well-defined thanks to Cauchy-Lipschitz theory (see \cite[Chapter 2]{GolseReview2016}).
To prove Theorem~\ref{thm:Dobrushin Mika}, we consider $\pi_0$ an optimal $W_1$-coupling between $f_{1}(0)$ and $f_{2}(0)$,
and we define the quantity
$$
Q(t):= \int_{(\bt^d\times\br^d)^2} \big[|X_1(t,x,v)-t V_1(t,x,v)-(X_2(t,y,w)-tV_2(t,x,v))|+|V_1(t,x,v)-V_2(t,y,w)|\big]d\pi_0(x,v,y,w).
$$
Note that
\be
\label{eq:Q W1 0}
Q(0)=\int_{(\bt^d\times\br^d)^2} \[|x-y|+|v-w|\]d\pi_0(x,v,y,w)=W_1(f_1(0),f_2(0)).
\ee
Also
\be
\label{eq:Q W1}
\begin{split}
 &\int_{(\bt^d\times\br^d)^2} \big[|X_1(t,x,v)-X_2(t,y,w)|+|V_1(t,x,v)-V_2(t,y,w)|\big]d\pi_0(x,v,y,w)\\
 &\leq \int_{(\bt^d\times\br^d)^2} \big[|X_1(t,x,v)-t V_1(t,x,v)-(X_2(t,y,w)-tV_2(t,x,v))|+|V_1(t,x,v)-V_2(t,y,w)|\big]d\pi_0(x,v,y,w)\\
 &\qquad +t \int_{(\bt^d\times\br^d)^2} |V_1(t,x,v)-V_2(t,y,w)|d\pi_0(x,v,y,w)\\
&\leq (1+t)Q(t).
\end{split}
\ee
Since $\frac{d}{dt}(X_i-tV_i)=-t\dot V_i$, one has
\begin{align}
\frac{d}{dt}Q(t)&\leq (1+t)\int_{(\bt^d\times\br^d)^2} \[|\dot V_1(t,x,v)-\dot V_2(t,y,w)|\]d\pi_0(x,v,y,w)\\
&=(1+t)\int_{(\bt^d\times\br^d)^2} \left|\nabla \bigl(K\ast \rho_{f_1}\bigr)(t,X_1(t,x,v)) - \nabla \bigl(K\ast \rho_{f_2}\bigr)(t,X_2(t,y,w))\right|d\pi_0(x,v,y,w)\\
&\leq (1+t)\int_{(\bt^d\times\br^d)^2} \left|\nabla \bigl(K\ast \rho_{f_1}\bigr)(t,X_1(t,x,v)) - \nabla \bigl(K\ast \rho_{f_2}\bigr)(t,X_1(t,x,v))\right|d\pi_0(x,v,y,w)\\
&\qquad +(1+t)\int_{(\bt^d\times\br^d)^2} \left|\nabla \bigl(K\ast \rho_{f_2}\bigr)(t,X_1(t,x,v)) - \nabla \bigl(K\ast \rho_{f_2}\bigr)(t,X_2(t,y,w))\right|d\pi_0(x,v,y,w)\\
&=:(1+t)\bigl[T_1+T_2\bigr].
\end{align}
We now observe that, since $\nabla K$ is $B$-Lipschitz, we can bound
$$
T_2\leq B\int_{(\bt^d\times\br^d)^2} \big[|X_1(t,x,v)-X_2(t,y,w)|\big]d\pi_0(x,v,y,w) \leq B(1+t)Q(t),
$$
where the second inequality follows from \eqref{eq:Q W1}.
For $T_1$, we note that
\begin{align}
\left|\nabla \bigl(K\ast \rho_{f_1}\bigr)(t,X_1(t,x,v)) - \nabla \bigl(K\ast \rho_{f_2}\bigr)(t,X_1(t,x,v))\right|
=\left| \int_{\mathbb{T}^d} \nabla K(X_1(t,x,v)-z)d\,\bigl(\rho_{f_1(t)}(z)-\rho_{f_2(t)}(z)\bigr) \right|.
\end{align}
Here, similarly to Dobrushin's argument, we use that $W_1$ admits the following dual formulation:
\be
\label{eq:dual W1}
W_1(\mu,\nu)=\sup_{\psi \,\text{1-Lip}}\int \psi\, d(\mu-\nu).
\ee
Thanks to this fact, since $z\mapsto \nabla K(X_1(t,x,v)-z)$ is $B$-Lipschitz, we deduce that
$$
\left| \int_{\bt^d} \nabla K(X_1(t,x,v)-z)\di\bigl(\rho_{f_1(t)}(z)-\rho_{f_2(t)}(z)\bigr) \right| \leq B\,W_1(\rho_{f_1(t)},\rho_{f_2(t)}),
$$
and therefore
$$
T_1\leq B\,W_1(\rho_{f_1(t)},\rho_{f_2(t)})\int_{(\bt^d\times\br^d)^2}d\pi_0(x,v,y,w)=B\,W_1(\rho_{f_1(t)},\rho_{f_2(t)}).
$$
Let $\gamma_t=(X_1(t,\cdot,\cdot),X_2(t,\cdot,\cdot))_\#\pi_0 \in \Pi(\rho_{f_1},\rho_{f_2}).$ Then, by the definition of $W_1$ (see Definition~\ref{def:Wp}),
$$
W_1(\rho_{f_1(t)},\rho_{f_2(t)}) \le\int_{(\bt^d)^2} |x-y| d\gamma_t(x,y)=\int_{(\bt^d\times\br^d)^2} |X_1(t,x,v)-X_2(t,y,w)| d\pi_0(x,v,y,w),
$$
so using \eqref{eq:Q W1} we conclude that $T_1 \leq B(1+t)Q(t)$.

In conclusion, we proved that
$$
Q'(t)\leq 2B(1+t)^2Q(t), 
$$
therefore
$$
Q(t)\leq e^{\frac23 B\left((1+t)^3-1\right)}Q(0).
$$
Recalling \eqref{eq:Q W1 0} and \eqref{eq:Q W1}, this yields
\be
\label{eq:Mika}
W_1(f_1(t),f_2(t))\leq (1+t) e^{\frac23 B\left((1+t)^3-1\right)}W_1(f_1(0),f_2(0)).
\ee
As noted in the introduction, this estimate is more powerful than the usual Dobrushin's estimate \footnote{Dobrushin's argument is performed considering
the so-called bounded-Lipschitz distance on probability measures, which is defined by duality against bounded Lipschitz functions.
However, the same proof where one replaces the bounded-Lipschitz distance with the $W_1$ distance (which can be defined by duality against Lipschitz functions,
as shown in \eqref{eq:dual W1}),
 provides this bound.}
\be
\label{eq:Dobrushin}
W_1(f_1(t),f_2(t))\leq e^{(1+2B)t}W_1(f_1(0),f_2(0))
\ee
when $B$ is small.
On the other hand, for large times, the term $(1+t)^3$ in our estimate provides a worse bound \eqref{eq:Dobrushin}.
Hence, both bounds are helpful depending on the mutual sizes of $B$ and $t$, and one can choose to apply whichever gives the stronger bound.
In conclusion, one has
\be
\label{eq:Mika2}
W_1(f_1(t),f_2(t))\leq \min\left\{(1+t) e^{\frac23 B\left((1+t)^3-1\right)},e^{(1+2B)t}\right\}W_1(f_1(0),f_2(0)),
\ee
as desired.
\end{proof}

\section{Stability estimates for Vlasov-Poisson and quasi-neutral limits}
\subsection{The Vlasov-Poisson system}
The Vlasov-Poisson system is the classical kinetic model describing dilute, totally ionised, unmagnetized plasma. In its most common form, $f$ is the distribution function of the electrons moving in a self-induced electrostatic field, while the ions are assumed to act as a fixed background.
In this section, we consider the phase space to be $\T^d\times\R^d,$ for reasons that will be explained later.
\begin{equation}
\label{vp}
(VP):= \left\{ \begin{array}{ccc}\partial_t f+v\cdot \nabla_x f+ E\cdot \nabla_v f=0, \\
E=-\nabla U, \\
\Delta U=1- \int_{\R^d} f\, dv= 1- \rho_f,\\
f\vert_{t=0}=f_{0}\ge0,\ \ \int_{\T^d \times \R^d} f_{0}\,dx\,dv=1.
\end{array} \right.
\end{equation}

The well-posedness theory of this system has been extensively studied, see, for example, the survey paper \cite{GPI-proc-wp}. Global-in-time classical solutions have been constructed under various conditions on the initial data (see for example \cite{Bardos-Degond, BR, Lions-Perthame, Pfaffelmoser, Schaeffer, Ukai-Okabe}), while global-in-time weak solutions were presented in \cite{Arsenev} and \cite{Horst-Hunze} for $L^p$ initial data (see also \cite{BD85, BDG86}). 
In this section, we will focus on an important contribution to the uniqueness theory made by Loeper \cite{Loep}, who proved uniqueness for solutions of \eqref{vp} with bounded density by means of a strong-strong stability estimate in Wasserstein.

\subsection{Quasi-neutral limits}
Since plasmas are excellent conductors of electricity, and any charges that develop are readily neutralized, they can be treated as being quasi-neutral. On the other hand, at small spatial and time scales, the quasi-neutrality is no longer verified. The distance over which quasi-neutrality may break down can be described in terms of the Debye length $\lambda_D$, and varies according to the physical characteristics of the plasma.
The Debye length is usually much shorter than the typical observation scale. Therefore, we can define the parameter $\eps := \lambda_D/L$ and consider the limit as $\eps$ tends to zero. This procedure is known as {\em quasi-neutral limit}.

When we take the Debye length into account, in appropriate dimensionless variables, the Vlasov-Poisson system becomes:
\begin{equation} \label{Eqn_VP_eps}
(VP)_{\eps} : = \left\{ \begin{array}{ccc}\partial_t f_\eps+v\cdot \nabla_x f_\eps+ E_\eps\cdot \nabla_v f_\eps=0, \\
E_\eps=-\nabla_x U_\eps, \\
-\eps^2 \Delta_x U_\eps=\rho_{f_{\eps}} - 1 ,\\
f_\eps\vert_{t=0}=f_{0,\eps}\ge0,\ \ \int_{\mathbb{T}^d \times \mathbb{R}^d} f_{0,\eps}\,dx\,dv=1,
\end{array} \right. 
\end{equation}
and the energy of the rescaled system is the following:
\begin{equation}
\mathcal{E}(f_\eps(t)):= \frac{1}{2}\int_{\mathbb{T}^d \times \mathbb{R}^d} f_\eps |v|^2 dx dv + \frac{\eps^2}{2}\int_{\mathbb{T}^d}|\nabla_x U_\eps|^2 dx.
\end{equation}
The quasi-neutral limit corresponds to a singular limit for the rescaled system \eqref{Eqn_VP_eps}, in which the formal limiting system is the \emph{Kinetic Isothermal Euler} system:
\begin{equation} \label{Eqn_KIE}
(KIE) :=\left\{ \begin{array}{ccc}\partial_t f +v\cdot \nabla_x f+ E\cdot \nabla_v f=0, \\
E= -\nabla_x U, \\
\rho = 1,\\
f\vert_{t=0}=f_{0}\ge0,\ \ \int_{\mathbb{T}^d \times \mathbb{R}^d} f_0\,dx\,dv=1.
\end{array} \right.
\end{equation}
The force $E=-\nabla_x U$ is defined implicitly through the incompressibility constraint $\rho=1$, and may be thought of as a Lagrange multiplier associated to this constraint. In other words, electrons move under the effect of a gradient in such a way that their density remains equal to $1$ everywhere. Thus (KIE) is a ``kinetic'' version of the incompressible Euler equations.
As shown in \cite{Brenier89}, the potential $U$ formally satisfies the Laplace equation
$$
-\Delta_x U=  \sum_{i,j} \partial_{x_i}\partial_{x_j} \int_{\R^d} v_i v_j f \di\,v.
$$

As discussed in \cite{GPI-proc-qn}, the justification of this limit is very delicate. In particular, it can fail even for smooth initial data. 
Still, a series of positive results are available.
In particular, as shown in \cite{HKI2, HKI1}, a way to get the validity of the quasi-neutral limit for a large class of data can be achieved if one can prove some quantitative strong-strong stability at the level of the $(VP)_{\eps}$ system.
Also, the stronger the stability estimate, the larger the class of initial data for which the quasi-neutral limit hold.
In \cite{HKI2, HKI1} the authors prove that the quasi-neutral limit holds for initial data that are an extremely small perturbation of an analytic function.
Here, by introducing a suitable non-linear version of the Wasserstein distance, we can considerably improve that results.

Here is our main theorem, which provides us with a new $W_2$ stability estimate.
We prove the result with a general parameter $\e\leq 1$ as this is necessary for the study of the quasi-neutral limit.
The reader interested in the Vlasov-Poisson case can simply apply our estimate with $\e=1$.
\begin{theorem}
\label{thm:Loeper}
Let $\eps \leq 1$, and let $f_1, f_2$ be two weak solutions of the $(VP)_{\eps}$ system \eqref{Eqn_VP_eps}, and set 
$$
\rho_1:= \int_{\br^d} f_1 \, dv, \quad \rho_2= \int_{\br^d} f_2 \, dv.
$$ 
Define the function
\be
\label{eq:At}
A(t):=\|\rho_1(t)\|_{L^\infty(\mathbb T^d)}+\|\rho_2(t)\|_{L^\infty(\mathbb T^d)},
 \ee
 and assume that $A(t) \in L^1([0,T])$ for some $T>0$. There exist a dimensional constant $C_d>0$ and a universal constant $c_0>0$ such that the following holds:
if $W_2(f_1(0),f_2(0))$ is sufficiently small so that $W_2(f_1(0),f_2(0))\leq c_0 \eps$ and 
\begin{equation}\label{eq:main hyp}
\sqrt{\left|\log\left( \eps^{-2}W_2(f_1(0),f_2(0))^2 \left| \log \(\frac{1}2\eps^{-2}W_2(f_1(0),f_2(0))^2\)\right|\right)\right|} \geq \frac{C_d}{\eps}\int_0^TA(s)\,ds+\sqrt{\left|\log\left(\frac{\eps}e\right)\right|},
\end{equation}
then
$$
W_2(f_1(t),f_2(t))^2 \leq 2 e^{-\left(\sqrt{\left|\log\left\{ \eps^{-2}W_2(f_1(0),f_2(0))^2 \left| \log \(\frac{1}2\eps^{-2}W_2(f_1(0),f_2(0))^2\)\right|\right\}\right|} - \frac{C_d}{\eps}\int_0^tA(s)\,ds\right)^2} \qquad\text{for all }t \in [0,T].
$$
\end{theorem}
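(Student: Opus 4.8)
The strategy is the one announced in the introduction: propagate a nonlinear, time‑dependent, anisotropic Wasserstein‑type functional along the characteristics, and reduce the statement to a Gr\"onwall‑type differential inequality. Write $\theta_0:=W_2(f_1(0),f_2(0))$.

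\textbf{Setup and the functional.} Let $(X_i,V_i)(t,x,v)$ be the characteristic flow of $f_i$: $\dot X_i=V_i$, $\dot V_i=E_i(t,X_i)$, with $E_i=-\na_xU_i$ and $-\eps^2\Delta_xU_i=\rho_i-1$. Since $\rho_i\in L^1([0,T];L^\infty(\bt^d))$, each $E_i(t,\cdot)$ is log‑Lipschitz with modulus controlled by $\eps^{-2}\|\rho_i(t)\|_{L^\infty}$, so the flows are well defined in the DiPerna--Lions/Ambrosio sense; fix Lagrangian representatives. Let $\pi_0$ be an optimal coupling for $W_2(f_1(0),f_2(0))$ and set $q_X(t):=\int|X_1-X_2|^2\,d\pi_0$, $q_V(t):=\int|V_1-V_2|^2\,d\pi_0$ (the arguments coupled by $\pi_0$). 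Then $(X_1,V_1,X_2,V_2)_\#\pi_0\in\Pi(f_1(t),f_2(t))$ and $(X_1,X_2)_\#\pi_0\in\Pi(\rho_1(t),\rho_2(t))$, so $W_2(f_1(t),f_2(t))^2\le q_X(t)+q_V(t)$ and $W_2(\rho_1(t),\rho_2(t))^2\le q_X(t)$. Define $Q(t)$ and the weight $\lambda(t)$ self‑consistently by $Q(t)=\lambda(t)q_X(t)+q_V(t)$, $\lambda(t)=|\log(\eps^{-2}Q(t))|$. For fixed $t$ this is a fixed‑point equation: $Q\mapsto|\log(\eps^{-2}Q)|q_X(t)+q_V(t)$ is decreasing on $(0,\eps^2)$, blows up as $Q\to0^+$ and equals $q_V(t)$ at $Q=\eps^2$, so it has a unique root $Q(t)\in(0,\eps^2)$ whenever $q_V(t)<\eps^2$, depending continuously (indeed differentiably) on $t$ by the implicit function theorem; this is the precise content of ``$Q(t)$ well defined whenever $Q(t)\ll1$''. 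On this range $\lambda\ge1$, so $W_2(f_1(t),f_2(t))^2\le Q(t)$; evaluating the identity at $t=0$ with $q_X(0)+q_V(0)=\theta_0^2$ gives $\eps^{-2}Q(0)\le\eps^{-2}\theta_0^2\,|\log(\tfrac12\eps^{-2}\theta_0^2)|$, i.e. $\lambda(0)\ge|\log(\eps^{-2}\theta_0^2|\log(\tfrac12\eps^{-2}\theta_0^2)|)|$, the square of the left side of \eqref{eq:main hyp}; the assumption $\theta_0\le c_0\eps$ puts us in the admissible regime.

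\textbf{The differential inequality.} Differentiating $Q$ and using $\dot\lambda=-\dot Q/Q$ one finds $\dot Q\,(1+q_X/Q)=\lambda\dot q_X+\dot q_V$, so, since $q_X\le Q/\lambda\le Q$, it suffices to prove $\lambda\dot q_X+\dot q_V\le\tfrac{C_d}{\eps}A(t)\sqrt{\lambda}\,Q$. The transport term is immediate: $\lambda\dot q_X=2\lambda\int(X_1-X_2)\cdot(V_1-V_2)\,d\pi_0\le2\sqrt{\lambda}\,\sqrt{\lambda q_X}\,\sqrt{q_V}\le\sqrt{\lambda}\,Q$. For $\dot q_V=2\int(V_1-V_2)\cdot(E_1(X_1)-E_2(X_2))\,d\pi_0$ split $E_1(X_1)-E_2(X_2)=(E_1(X_1)-E_2(X_1))+(E_2(X_1)-E_2(X_2))$. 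The \emph{Loeper term} uses $\int|E_1(X_1)-E_2(X_1)|^2d\pi_0=\int_{\bt^d}|E_1-E_2|^2\rho_1(t)\le\|\rho_1(t)\|_{L^\infty}\|E_1-E_2\|_{L^2}^2$ and Loeper's $L^2$ stability estimate for the rescaled Poisson coupling, $\|E_1-E_2\|_{L^2}^2\le\eps^{-4}\max_i\|\rho_i(t)\|_{L^\infty}\,W_2(\rho_1(t),\rho_2(t))^2\le\eps^{-4}A(t)\,q_X$; combined with Cauchy--Schwarz, $q_V\le Q$, $q_X\le Q/\lambda$, and the smallness of $W_2(\rho_1,\rho_2)^2\le q_X$ (tracking carefully the $\eps$‑weight carried by the rescaled potential, which is what produces $1/\eps$ rather than $1/\eps^2$), this contributes $\le\tfrac{C_d}{\eps}A(t)\sqrt{\lambda}\,Q$. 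The \emph{log‑Lipschitz term} uses the log‑Lipschitz modulus of $E_2(t,\cdot)$, so it is $\lesssim\eps^{-2}A(t)\int|V_1-V_2|\,|X_1-X_2|(1+|\log|X_1-X_2||)\,d\pi_0$; Young's inequality, the concavity/Jensen bound $\int|X_1-X_2|^2(1+|\log|X_1-X_2||)^2d\pi_0\le q_X(1+|\log q_X|)^2$, and the fact that $\lambda$ was chosen precisely so that $1+|\log q_X|\lesssim\sqrt{\lambda}$ in the relevant regime (so the extra logarithm is absorbed by the weight) again give $\le\tfrac{C_d}{\eps}A(t)\sqrt{\lambda}\,Q$. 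Hence $\dot Q(t)\le\tfrac{C_d}{\eps}A(t)\sqrt{\lambda(t)}\,Q(t)$.

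\textbf{Conclusion.} Equivalently $\dot\lambda\ge-\tfrac{C_d}{\eps}A(t)\sqrt{\lambda}$; with $\mu(t):=\sqrt{\lambda(t)}$ this is $\dot\mu\ge-\tfrac{C_d}{2\eps}A(t)\ge-\tfrac{C_d}{\eps}A(t)$, whence $\mu(t)\ge\mu(0)-\tfrac{C_d}{\eps}\int_0^tA(s)\,ds$. By the lower bound on $\lambda(0)=\mu(0)^2$ and by \eqref{eq:main hyp}, the right side stays $\ge\sqrt{|\log(\eps/e)|}\ge1$ on $[0,T]$, which keeps $Q(t)\in(0,\eps^2)$ — closing the continuity/bootstrap argument that makes $Q,\lambda$ well defined throughout $[0,T]$ — and legitimizes the computations above. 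Finally, since $\eps^{-2}Q(t)=e^{-\lambda(t)}$ and $\lambda(t)=\mu(t)^2\ge\bigl(\mu(0)-\tfrac{C_d}{\eps}\int_0^tA\bigr)^2$,
\[
W_2(f_1(t),f_2(t))^2\le Q(t)=\eps^2e^{-\lambda(t)}\le e^{-(\mu(0)-\frac{C_d}{\eps}\int_0^tA)^2}\le 2\,e^{-\bigl(\sqrt{|\log(\eps^{-2}\theta_0^2|\log(\frac12\eps^{-2}\theta_0^2)|)|}-\frac{C_d}{\eps}\int_0^tA\bigr)^2},
\]
which is the assertion (after relabelling the dimensional constant).

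\textbf{Main obstacle.} The heart of the matter is the estimate for $\dot q_V$ together with the self‑consistent choice of $\lambda$: one has to play Loeper's $L^2$‑stability estimate off against the log‑Lipschitz estimate for the Poisson field so that \emph{both} are dominated by $\tfrac{C_d}{\eps}A\sqrt{\lambda}\,Q$ — in particular arranging that the logarithmic weight absorbs exactly one logarithm, which is what upgrades the crude $\dot D\lesssim D(1+|\log D|)^2$‑type bound (only yielding $\dot\mu\gtrsim-\mu^3$) to the benign $\dot\mu\ge-\tfrac{C_d}{\eps}A$ — while keeping precise track of the powers of $\eps$. The other delicate point is the continuity/bootstrap argument for the well‑definedness of $Q(t)$, which is exactly where \eqref{eq:main hyp} enters.
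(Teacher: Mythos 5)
Your overall architecture is exactly that of the paper: an implicitly defined quantity $Q(t)=\lambda(t)\,q_X(t)+q_V(t)$ with $\lambda$ a decreasing function of $Q$ itself, well-definedness via monotonicity/implicit function theorem plus a bootstrap, a splitting of the field difference into a Loeper $L^2$-term and a log-Lipschitz term, a Gr\"onwall inequality for $\sqrt{|\log Q|}$, and the comparison of $Q(0)$ with $W_2(f_1(0),f_2(0))^2$ through the inverse of $s\mapsto s/|\log s|$. However, there is a genuine gap precisely at the point you yourself identify as the heart of the matter: the $\eps$-bookkeeping in the weight. You take $\lambda(t)=|\log(\eps^{-2}Q(t))|$, with $\eps^{-2}$ \emph{inside} the logarithm, whereas the proof requires $\lambda(t)=\eps^{-2}|\log Q(t)|$, with $\eps^{-2}$ as a prefactor. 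With your choice, the Loeper term contributes $2\sqrt{q_V}\,\|E_1(X_1)-E_2(X_1)\|_{L^2(\pi_0)}\le \frac{C A(t)}{\eps^{2}}\sqrt{Q}\sqrt{q_X}\le \frac{CA(t)}{\eps^{2}}\frac{Q}{\sqrt{\lambda}}$, and bounding this by $\frac{C_d}{\eps}A(t)\sqrt{\lambda}\,Q$ requires $\lambda\gtrsim \eps^{-1}$. But $\lambda=|\log(\eps^{-2}Q)|$ is only logarithmically large: along the evolution the bootstrap only guarantees $\lambda(t)\gtrsim|\log\eps|$ (e.g.\ when $Q$ is near the admissibility threshold), which is far below $\eps^{-1}$ for small $\eps$, so your claimed inequality $\lambda\dot q_X+\dot q_V\le\frac{C_d}{\eps}A\sqrt{\lambda}\,Q$ is not established; what your computation actually yields is a coefficient $\frac{C_d}{\eps^{2}}$, i.e.\ a strictly weaker theorem than the one stated (and one that would also degrade the quasi-neutral application). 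The same problem affects the log-Lipschitz term, and there your stated absorption mechanism is moreover incorrect: since $q_X\le Q$, one has $|\log q_X|\ge|\log Q|\gtrsim\lambda$ for your $\lambda$, so the claim ``$1+|\log q_X|\lesssim\sqrt{\lambda}$'' goes in the wrong direction.

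The correct mechanism, which is what the $\eps^{-2}$ prefactor buys, is the following: with $\lambda=\eps^{-2}|\log Q|$ one has $q_X\le Q/\lambda=\eps^{2}Q/|\log Q|$, so the Loeper term becomes $\frac{CA}{\eps^{2}}\cdot\eps\frac{Q}{\sqrt{|\log Q|}}=\frac{CA}{\eps}\frac{Q}{\sqrt{|\log Q|}}$, and the log-Lipschitz term becomes $\frac{CA}{\eps^{2}}\cdot\eps\frac{Q}{\sqrt{|\log Q|}}\,\bigl|\log\bigl(\eps^{2}Q/|\log Q|\bigr)\bigr|\le\frac{CA}{\eps}Q\sqrt{|\log Q|}$ once $Q\le\eps$, because $\bigl|\log\bigl(\eps^{2}Q/|\log Q|\bigr)\bigr|\le C|\log Q|$ in that regime; the weight thus absorbs exactly one logarithm \emph{and} one power of $\eps^{-1}$, giving $Q'\le\frac{C_dA}{\eps}Q\sqrt{|\log Q|}$ and hence $\frac{d}{dt}\sqrt{|\log Q|}\ge-\frac{C_dA}{2\eps}$, which is where the $\frac{C_d}{\eps}\int_0^tA$ in the statement comes from. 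Putting $\eps^{-2}$ inside the logarithm only shifts $\lambda$ by $2|\log\eps|$ and cannot replace this. The rest of your outline (well-definedness and local Lipschitz continuity of $Q$, the sign discussion for $\dot Q$, the bootstrap keeping $Q$ small via \eqref{eq:main hyp}, and the initial-time comparison — where you should also state the inverse-function step $s/|\log s|\le\tau\le c_0\Rightarrow s\le 2\tau|\log\tau|$ explicitly rather than deduce the bound ``directly'' from the identity at $t=0$) matches the paper and is fine once the weight is corrected.
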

\begin{remark} The assumption \eqref{eq:main hyp} depends on the time interval $[0,T].$ If $T$ is very small so that 
$$
\frac{C_d}{\eps}\int_0^TA(s)\,ds\le\sqrt{\left|\log\left(\frac{\eps}e\right)\right|}
$$
then \eqref{eq:main hyp} 
corresponds to $W_2(f_1(0),f_2(0))\le \eps^3.$
Of course this is not the relevant regime since the time interval is usually at least of size $1$. 
In particular, since $A(t)\ge 2,$ \footnote{We recall that $\int_{\mathbb{T}^d}\rho_i(x,t)dx=1,$ that implies $\|\rho_i(\cdot, t) \|_{L^\infty(\mathbb{T}^d)}\ge 1,$ $i=1,2.$ Therefore $A(t)\ge 2.$ }
$$
\frac{C_d}{\eps}\int_0^TA(s)\,ds= \frac{C_T}{\eps} \qquad \text{for some constant $C_T\gtrsim 1$}.
$$
Therefore \eqref{eq:main hyp} corresponds to asking $W_2(f_1(0),f_2(0))$ being bounded by $e^{-C\eps^{-2}}.$ This requirement is very natural in this context, as also discussed in Remark~\ref{rem:double}.
\end{remark}
As in \cite{HKI2, HKI1}, Theorem~\ref{thm:Loeper} yields the validity of the quasi-neutral limit for $W_2$-perturbations of analytic data. However, our estimate is stronger with respect to the previous results and provides an almost optimal rate in the quasi-neutral limit. More broadly, we believe that our approach for proving Theorem~\ref{thm:Loeper} has its own interest and could be used in other settings.

To state our application to the quasi-neutral limit, we need to recall some notation introduced by Grenier \cite{Grenier96} in one of the first mathematical works on this topic. In \cite{Grenier96} the author relies on an interpretation of the plasma as a superposition of a -possibly uncountable- collection of fluids and he shows that the quasi-neutral limit holds when the sequence of initial data $f_{0,\e}$ enjoys uniform analytic regularity with respect to the space variable.  As explained in \cite{HKI2} (see the discussion after Definition $1.4$), this decomposition is purely a technical tool and it does not impose any restriction on the initial datum.
This result has been improved by Brenier \cite{Br00}, who gives a rigorous justification of  the quasi-neutral limit in the so called ``cold electron'' case, i.e. when the initial distribution $f_{0,\e}$ converges to a monokinetic profile 
$$
f_0(x,v) = \rho_0(x) \delta_{v= v_0(x)}
$$ 
where $\delta_v$ denotes the Dirac measure in velocity, see also \cite{Br00,Mas,GSR}. 

Let us define a suitable analytic norm, as in \cite{Grenier96}: given $\delta>0$ and a function $g:\bt^d \to \mathbb R$, we define
$$
\| g \|_{B_\delta} := \sum_{k \in \bz^d} | \widehat{g}(k) | \delta^{|k|},
$$
where $\widehat{g}(k)$ is the $k$-th Fourier coefficient of $g$. We define $B_\delta$ as the space of functions $g$ such that $\| g \|_{B_\delta}<+\infty$.

\begin{corol}
\label{cor1}
Let $d=2, 3$, and let $\gamma$, $\delta_0$, 
and $C_0$ be positive constants.
Consider a sequence $(f_{0,\e})$ of non-negative initial data in $L^1$ for \eqref{Eqn_VP_eps} such that for all $\e \in (0,1)$, and all $x \in \bt^d$, 
\begin{itemize}
\item (uniform estimates) 
$$
\| f_{0,\e}\|_\infty\leq C_0, \quad \mathcal{E}(f_{0,\e}) \leq C_0,
$$
\item (compact support in velocity)
$$
f_{0,\e}(x,v)= 0 \quad \text{if } |v| > \frac{1}{\e^\gamma},
$$
\item (analytic + perturbation)
Assume the following decomposition:
$$
f_{0,\e} = g_{0,\e} + h_{0,\e},
$$
where $(g_{0,\e})$ is a sequence of continuous functions
satisfying
$$
\sup_{\e\in (0,1)}\sup_{v \in \br^d} \, (1+|v|^2) \| g_{0,\e} (\cdot,v)\|_{B_{\delta_0}} \leq C_0,
$$
admitting a limit $g_0$ in the sense of distributions.
Furthemore, $(h_{0,\e})$ is a sequence of functions 
satisfying for all $\e>0$
\be
\label{eq:K}
W_2(f_{0,\e},g_{0,\e}) \leq e^{-K \eps^{-2\zeta}}\qquad \text{with }\zeta=\left\{
\begin{array}{ll}
1+2\max\{\beta,\gamma\}) & \text{if }d=2,\\
1+\max\{38,3\gamma\}) & \text{if }d=3,\\
\end{array}
\right.
\ee
for some constants $K>0$ and $\beta>2$.
\end{itemize}
For all $\e \in (0,1)$, consider $f_\e(t)$ a global weak solution of \eqref{Eqn_VP_eps} with initial condition $f_{0,\e}$, and define the filtered distribution function
\begin{equation} 
\widetilde{f}_\e(t,x,v) := f_\e \Big(t,x,v-\frac{1}{i}(d_+(t,x)e^{\frac{it}{\sqrt \e}}-d_-(t,x)e^{-\frac{it}{\sqrt \e}})\Big)
\end{equation}
where $(d_\pm)$ are the correctors are defined as the solution of
\begin{equation}
\label{eq:correctors}
{\rm curl} \ d_{\pm}=0, \qquad {\rm div} \bigg(\pt_t d_{\pm}+\left(\int \rho_\theta v_\theta \mu (d\theta) \cdot \na \right)d_{\pm} \bigg) =0,
\end{equation}
\begin{equation}
\label{eq:correctors_initial}
{\rm div} d_{\pm} (0)= \underset{\e \to 0}{\lim} {\rm div} \frac{\sqrt \e E^\e(0) \pm i j^\e(0)}{2},\qquad j^\e:= \int \rho^\e_\theta v^\e_ \theta \mu (d\theta).
\end{equation}
Then there exist $T>0$, and $g(t)$ a weak solution on $[0,T]$ of \eqref{Eqn_KIE} with initial condition $g_0,$ such that
$$
\lim_{\e \to 0} \sup_{t \in [0,T]} W_1(\widetilde{f}_\e(t), g(t)) = 0.
$$

\end{corol}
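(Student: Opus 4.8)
The plan is to compare the true solution $f_\e$ with an analytic \emph{approximate solution} $f^{\mathrm{app}}_\e$ produced by Grenier's scheme, to control their $W_2$-distance by means of Theorem~\ref{thm:Loeper}, and to conclude using the fact that the \emph{filtered} approximate solution converges to a weak solution of \eqref{Eqn_KIE}. The hypothesis $\sup_{\e}\sup_{v}(1+|v|^2)\|g_{0,\e}(\cdot,v)\|_{B_{\delta_0}}\le C_0$ is exactly the uniform analyticity needed to run Grenier's construction \cite{Grenier96} in the refined form of \cite{HKI1,HKI2}: for every $\e\in(0,1)$ it produces an approximate solution $f^{\mathrm{app}}_\e$ of \eqref{Eqn_VP_eps} on a common interval $[0,T]$, with $T>0$ independent of $\e$, which stays uniformly analytic in $x$, has density uniformly close to $1$, and satisfies $f^{\mathrm{app}}_\e(0)=g_{0,\e}$ (so that $W_2(f_\e(0),f^{\mathrm{app}}_\e(0))=W_2(f_{0,\e},g_{0,\e})$); its fast oscillations are carried by the velocity shift $v\mapsto v-\psi_t(x)$, $\psi_t(x):=\frac1i\bigl(d_+(t,x)e^{it/\sqrt\e}-d_-(t,x)e^{-it/\sqrt\e}\bigr)$, where the correctors $d_\pm$ solve \eqref{eq:correctors}--\eqref{eq:correctors_initial} and are uniformly analytic in $x$ on $[0,T]$; and, crucially, thanks to analyticity one can \emph{sum} the WKB expansion rather than truncate it, so that $f^{\mathrm{app}}_\e$ solves \eqref{Eqn_VP_eps} up to a consistency error that is exponentially small, $\lesssim e^{-c\e^{-a}}$ with $a$ as large as we need. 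Finally, the filtered profile $\widetilde f^{\mathrm{app}}_\e(t,x,v):=f^{\mathrm{app}}_\e(t,x,v-\psi_t(x))$ converges, uniformly on $[0,T]$ and in $W_1$, to a weak solution $g$ of \eqref{Eqn_KIE} on $[0,T]$ with $g(0)=g_0$.

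\emph{Density control for $f_\e$.} To apply Theorem~\ref{thm:Loeper} with $(f_1,f_2)=(f_\e,f^{\mathrm{app}}_\e)$ one needs the function $A$ of \eqref{eq:At} to satisfy $A\in L^1([0,T])$ together with a quantitative bound on $\int_0^TA(s)\,ds$. Since $\|\rho_{f^{\mathrm{app}}_\e}(t)\|_{L^\infty}=O(1)$ by the first step, the point is to control $\|\rho_{f_\e}(t)\|_{L^\infty}$. Transport preserves $\|f_\e(t)\|_{\infty}\le C_0$, so $\|\rho_{f_\e}(t)\|_{\infty}\le C_0\,R_\e(t)^d$, where $R_\e(t)$ bounds the $v$-support of $f_\e(t)$; starting from $R_\e(0)\le\e^{-\gamma}$ one propagates a bound on $R_\e$ on $[0,T]$ by a continuation argument: on the maximal subinterval where $W_2(f_\e,f^{\mathrm{app}}_\e)$ stays below the threshold $c_0\e$ of Theorem~\ref{thm:Loeper}, the field $E_\e$ driving $f_\e$ is controlled in terms of $\e^{-1}$, of the energy bound $\mathcal E(f_{0,\e})\le C_0$ and of the closeness to the smooth field of $f^{\mathrm{app}}_\e$, so that the characteristics of $f_\e$ spread at a controlled rate. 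This yields a bound $\int_0^TA(s)\,ds\lesssim\e^{-M}$ with $M$ depending only on $\gamma$ and $d$, and it is precisely this exponent (together with the accuracy required of Grenier's scheme) that fixes the constant $\beta$ and the exponent $\zeta$ in \eqref{eq:K}.

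\emph{Comparison and passage to the limit.} Apply Theorem~\ref{thm:Loeper} to $(f_\e,f^{\mathrm{app}}_\e)$, in the version allowing $f_2$ to solve \eqref{Eqn_VP_eps} only up to the consistency error of the first step: in the Osgood-type differential inequality for the nonlinear functional $Q$, that error plays the role of an extra, exponentially small, initial datum — and its exponential smallness is essential, since a merely polynomially small error would be amplified into an $O(1)$ error over $[0,T]$. Feeding in $W_2(f_\e(0),f^{\mathrm{app}}_\e(0))=W_2(f_{0,\e},g_{0,\e})\le e^{-K\e^{-2\zeta}}$ from \eqref{eq:K} and $\frac{C_d}{\e}\int_0^TA\lesssim\e^{-1-M}$, the smallness condition \eqref{eq:main hyp} — whose left-hand side is $\gtrsim\sqrt K\,\e^{-\zeta}$ — holds for $\e$ small precisely because $\zeta>1+M$; this is why $\zeta=1+2\max\{\beta,\gamma\}$ suffices when $d=2$ and $\zeta=1+\max\{38,3\gamma\}$ when $d=3$. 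Theorem~\ref{thm:Loeper} then gives $W_2(f_\e(t),f^{\mathrm{app}}_\e(t))\to0$ as $\e\to0$, uniformly on $[0,T]$. Finally, for each $t\in[0,T]$ the map $(x,v)\mapsto(x,v-\psi_t(x))$ is Lipschitz on $\bt^d\times\br^d$ with Lipschitz constant $\le 1+\|\nabla_x\psi_t\|_{\infty}\le C$, uniformly in $\e$ (because $d_\pm$ are uniformly analytic in $x$), so $W_2(\widetilde f_\e(t),\widetilde f^{\mathrm{app}}_\e(t))\le C\,W_2(f_\e(t),f^{\mathrm{app}}_\e(t))$. Using $W_1\le\sqrt2\,W_2$ and the triangle inequality,
$$
W_1(\widetilde f_\e(t),g(t))\le\sqrt2\,C\,W_2\bigl(f_\e(t),f^{\mathrm{app}}_\e(t)\bigr)+W_1\bigl(\widetilde f^{\mathrm{app}}_\e(t),g(t)\bigr),
$$
and both terms on the right tend to $0$ uniformly on $[0,T]$ — the first by the above, the second by the construction of $g$ — which is the claim.

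\emph{Main obstacle.} The heart of the matter is the interplay between the last two steps: Theorem~\ref{thm:Loeper} needs an a priori $L^\infty$ bound on $\rho_{f_\e}$, but for a \emph{weak} solution of \eqref{Eqn_VP_eps} such a bound — equivalently, control of the growth of the $v$-support — is only available \emph{through} closeness to the analytic approximant, which is in turn what Theorem~\ref{thm:Loeper} produces. Breaking this circularity by a careful continuity argument, and tracking how the (polynomial in $\e^{-1}$) size of $\int_0^TA$ together with the exponential accuracy of the approximate solution make \eqref{eq:main hyp} hold for the stated $\zeta$ — in particular the bookkeeping behind the constant $38$ in $d=3$, which comes from the number of derivatives and Grenier iterations needed there — is where the real work lies; the remaining ingredients reduce to the quantitative estimates of \cite{Grenier96, HKI1, HKI2} and to Theorem~\ref{thm:Loeper}.
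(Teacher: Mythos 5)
Your overall architecture (compare $f_\e$ with an analytic object via Theorem~\ref{thm:Loeper}, then transfer Grenier-type convergence of the filtered analytic object to $\widetilde f_\e$) is the right one, but two of your central steps are not justified and are precisely where your route diverges from what the paper actually does. First, you compare $f_\e$ with an \emph{approximate} solution $f^{\mathrm{app}}_\e$ obtained by ``summing the WKB expansion'' and then invoke ``Theorem~\ref{thm:Loeper} in the version allowing $f_2$ to solve \eqref{Eqn_VP_eps} only up to the consistency error.'' No such version exists in the paper, and proving it would require redoing the whole nonlinear $Q$-argument with a source term; moreover, the exponentially accurate summed-WKB approximate solution you postulate is not what \cite{Grenier96, HKI1, HKI2} provide. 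The paper sidesteps this entirely: it takes $g_\e(t)$ to be the \emph{exact} weak solution of \eqref{Eqn_VP_eps} with initial datum $g_{0,\e}$, so that Theorem~\ref{thm:Loeper} applies as stated with $(f_1,f_2)=(f_\e,g_\e)$ and $W_2(f_\e(0),g_\e(0))=W_2(f_{0,\e},g_{0,\e})$, and the convergence of the filtered $g_\e$ to a solution $g$ of \eqref{Eqn_KIE} is exactly Grenier's theorem for exact solutions with uniformly analytic data, as packaged in \cite[Proof of Theorem 1.7]{HKI2}.

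Second, the ``main obstacle'' you identify --- circularity between the $L^\infty$ bound on $\rho_{f_\e}$ and the output of Theorem~\ref{thm:Loeper} --- is not an obstacle in the paper's argument, and your proposed continuation argument to break it is both vague and unnecessary. The bounds
$\|\rho_{g_\e}\|_{L^\infty([0,T]\times\bt^d)}\le\bar C$ and $\|\rho_{f_\e}\|_{L^\infty([0,T]\times\bt^d)}\le\bar C\,\e^{-(\zeta-1)}$
are a priori estimates from \cite[Section 4]{HKI2}, derived from the compact velocity support $|v|\le\e^{-\gamma}$, the uniform $L^\infty$ and energy bounds, and Batt--Rein-type control of the velocity support growth (this is the reason for the compact-support hypothesis); they do not use closeness of $f_\e$ to anything. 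With these, the bookkeeping is transparent and differs from your ``$\zeta>1+M$'' heuristic: one gets $\frac{C_d}{\e}\int_0^t A(s)\,ds\le C_d\bar C T\e^{-\zeta}$, while \eqref{eq:K} makes the left-hand side of \eqref{eq:main hyp} of size $\sqrt K\,\e^{-\zeta}$, i.e.\ the \emph{same} power of $\e^{-1}$; the hypothesis is then satisfied for small $\e$ by requiring $C_d^2\bar C^2T^2<K$, shrinking $T$ if necessary, rather than by a strict gap between exponents. Your final filtering step (Lipschitz velocity shift, $W_1\le\sqrt2\,W_2$, triangle inequality) is fine and is essentially how one concludes, but as written your proof rests on an unproved variant of Theorem~\ref{thm:Loeper} and an uncompleted bootstrap, so it does not stand on its own.
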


\begin{remark}\label{rem:double} Already in the one dimensional case, there is a negative result stating that an initial rate of convergence of the form $W_2(f_{0,\e},g_{0,\e}) \leq \e^k$ for some $k>0$ is not sufficient to ensure the validity of the quasi-neutral limit for positive times. This is the consequence of \emph{instability mechanisms} described in \cite{Grenier99} and \cite{HKH}. Hence, our assumption on the size of $W_2(f_{0,\e},g_{0,\e}) $ considerably improves the results in \cite{HKI2, HKI1}, where a double exponential $\exp\big(-\exp({K \eps^{-\zeta}})\big)$ was required.
\end{remark}

\begin{remark} In Corollary~\ref{cor1} we consider sequences of initial conditions with compact support in velocity (yet, we allow the support to grow polynomially as $\e$ goes to zero).
The reason is that
we need $L^\infty$ bounds on the density $\rho_{f_\e}(t)=\int f_\e(t)\,dv$, so a control on the support in velocity is needed. 
We have decided to put these assumptions because they are the same as in \cite{BR} and so we can rely on some estimates proved in that paper.
However, using the argument in \cite{Pallard} (see also \cite{GPI-WP}) one could relax the assumptions and require only a moment condition on $f_{0,\eps}$. Providing this extension is not difficult, but it would require some work that would go beyond
the main goal of this paper.
\end{remark}

Before proving Proof of Theorem~\ref{thm:Loeper}, we first show how it implies Corollary~\ref{cor1}.

\begin{proof}[Proof of Corollary~\ref{cor1}]
Let $g_\e(t)$ denote the solution of \eqref{Eqn_VP_eps} starting from $g_{0,\e}$. As shown in \cite[Section 4]{HKI2}, under the assumptions in the statement, the following bounds hold:
$$
\|\rho_{g_\e}\|_{L^\infty([0,T]\times \bt ^d)}\leq \bar C,\qquad \|\rho_{f_\e}\|_{L^\infty([0,T]\times \bt ^d)}\leq \bar C\e^{-(\zeta-1)},
$$
where $\zeta=\zeta(d)$ is as in the statement.
Hence Theorem~\ref{thm:Loeper} and \eqref{eq:K} yield
$$
\sup_{[0,T]}W_2(f_\e(t),g_\e(t)) \leq 2 e^{-\left(\sqrt{\left|\log\left\{ \eps^{-2}W_2(f_{0,\e},g_{0,\e})^2 \left| \log \(\frac{1}2\eps^{-2}W_2(f_{0,\e},g_{0,\e})^2\)\right|\right\}\right|} - C_d\bar C T\e^{-\zeta} \right)^2}
$$
provided $C_d^2\bar C^2T^2<K$ (which can be guaranteed by taking $T$ smaller if necessary).
This implies that $\sup_{[0,T]}W_2(f_\e(t),g_\e(t)) \to 0$ as $\e \to 0,$
and we can now conclude as in \cite[Proof of Theorem 1.7]{HKI2}.
\end{proof}
\subsection{Proof of Theorem~\ref{thm:Loeper}}

Before starting the proof we recall \cite[Lemma 3.2]{HKI2}, see also \cite[Lemma 3.3]{GPI-WP}.
\begin{lemma}
\label{lem:Loeper}
Let $\Psi_i:\mathbb T^d\to \mathbb R$ solve
$$
-\e^2 \Delta \Psi_i=\rho_i-1,\qquad i=1,2.
$$
Then
$$
 \e^2 \|\nabla \Psi_1-\nabla \Psi_2\|_{L^2(\mathbb T^d)} \leq \Bigl[\max\bigl\{\|\rho_1\|_{L^\infty(\mathbb T^d)},
 \|\rho_2\|_{L^\infty(\mathbb T^d)}\bigr\} \Bigr]^{1/2}\,W_2(\rho_1,\rho_2),
$$
$$
\e^2|\nabla \Psi_i(x) - \nabla \Psi_i(y)| \leq C\,|x-y|\,\log\biggl(\frac{4 \sqrt{d}}{|x-y|} \biggr) \,\|\rho_i-1\|_{L^\infty(\mathbb T^d)}\qquad\forall\,x,y \in\mathbb T^d,\, i=1,2.
$$
\end{lemma}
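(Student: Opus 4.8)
The plan is to prove the two inequalities of Lemma~\ref{lem:Loeper} separately, as they are of different nature: the first is a dual/transport estimate comparing an $H^{-1}$-type quantity with $W_2$, while the second is a classical log-Lipschitz bound for the gradient of the Newtonian potential on the torus.

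\emph{First inequality.} Setting $h:=\Psi_1-\Psi_2$, so that $-\e^2\Delta h=\rho_1-\rho_2$, I would start from the identity
$$
\e^2\int_{\mathbb T^d}|\nabla h|^2\,dx=\int_{\mathbb T^d}h\,(\rho_1-\rho_2)\,dx.
$$
Let $\gamma\in\Pi(\rho_1,\rho_2)$ be an optimal $W_2$-coupling, and write the right-hand side as $\int_{(\mathbb T^d)^2}\bigl(h(x)-h(y)\bigr)\,d\gamma(x,y)$. Interpolating along the segment (or geodesic on $\mathbb T^d$) joining $x$ to $y$, $h(x)-h(y)=\int_0^1\nabla h(x_s)\cdot(x-y)\,ds$ with $x_s$ the point at parameter $s$, one gets
$$
\e^2\int|\nabla h|^2\le\int_0^1\!\!\int_{(\mathbb T^d)^2}|\nabla h(x_s)|\,|x-y|\,d\gamma(x,y)\,ds\le\Bigl(\int_0^1\!\!\int|\nabla h(x_s)|^2\,d\gamma\,ds\Bigr)^{1/2}W_2(\rho_1,\rho_2),
$$
by Cauchy--Schwarz. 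The key point is then to bound $\int_0^1\!\!\int|\nabla h(x_s)|^2\,d\gamma\,ds$ by $\max\{\|\rho_1\|_\infty,\|\rho_2\|_\infty\}\int|\nabla h|^2\,dx$: this follows because the image of $\gamma$ under $(x,y)\mapsto x_s$ is a measure whose density is controlled by the displacement interpolant between $\rho_1$ and $\rho_2$, hence has $L^\infty$ norm bounded by $\max\{\|\rho_1\|_\infty,\|\rho_2\|_\infty\}$ for every $s$ (convexity of the constraint $\|\cdot\|_\infty\le M$ along displacement interpolation, cf. McCann). Dividing through by $\bigl(\e^2\int|\nabla h|^2\bigr)^{1/2}$ and then by $\e^2$ gives exactly the stated bound. (Since this is \cite[Lemma 3.2]{HKI2}, I would in fact just cite it, but the above is the argument.)

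\emph{Second inequality.} Here I would use the explicit representation $\e^2\nabla\Psi_i=\nabla G\ast(\rho_i-1)$, where $G$ is the Green's function of $-\Delta$ on $\mathbb T^d$, so that $\nabla G(z)=c_d\,z/|z|^d+(\text{smooth remainder})$ near $z=0$. Then
$$
\e^2\bigl(\nabla\Psi_i(x)-\nabla\Psi_i(y)\bigr)=\int_{\mathbb T^d}\bigl(\nabla G(x-z)-\nabla G(y-z)\bigr)(\rho_i(z)-1)\,dz,
$$
and the bound $|\nabla G(x-z)-\nabla G(y-z)|\le C|x-y|\bigl(|x-z|^{-d}+|y-z|^{-d}\bigr)$ for $|x-z|,|y-z|\gtrsim|x-y|$, together with $|\nabla G|\lesssim|x-z|^{1-d}$ for the near region, gives upon integrating against $\|\rho_i-1\|_\infty$ the classical logarithmic loss: splitting the $z$-integral into $\{|x-z|\le 2|x-y|\}$ and its complement, the first piece contributes $O(|x-y|)$ and the second $O\bigl(|x-y|\log(1/|x-y|)\bigr)$, uniformly for $x,y\in\mathbb T^d$ (the diameter $\sqrt d$ of $\mathbb T^d$ fixing the constant $4\sqrt d$). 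This is a standard Calderón--Zygmund/potential-theoretic computation.

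The main obstacle — the only non-routine point — is the $L^\infty$ control of the interpolated density in the first inequality, i.e. justifying that pushing the optimal coupling forward along geodesics produces measures with densities bounded by $\max\{\|\rho_1\|_\infty,\|\rho_2\|_\infty\}$; everything else is Cauchy--Schwarz and standard singular-integral estimates. Since both statements are quoted verbatim from \cite{HKI2} (and \cite{GPI-WP}), the cleanest route in the paper is simply to invoke those references, which is presumably what the author does.
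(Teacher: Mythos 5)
Your proposal is correct and, in its closing remark, coincides with what the paper actually does: Lemma~\ref{lem:Loeper} is not proved there at all, but simply recalled from \cite[Lemma 3.2]{HKI2} (see also \cite[Lemma 3.3]{GPI-WP}). Your sketch of the underlying argument — the $H^{-1}$-versus-$W_2$ comparison via an optimal coupling with the $L^\infty$ control of the interpolated density as the one non-routine ingredient, plus the standard log-Lipschitz estimate for the gradient of the torus Green's function — is exactly the classical Loeper-type proof behind those references, so nothing further is needed.
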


Let $(X_i,V_i)$ denote the characteristic flow associated to $f_i$, that is
$$
\left\{
\begin{array}{l}
\dot X_i(t,x,v)=V_i(t,x,v),\\
\dot V_i(t,x,v)=E_i(t,X_i(t,x,v)),\\
X_i(0,x,v)=x,\,\,V_i(0,x,v)=v,
\end{array}
\right.
\qquad E_i=\nabla U_i,\qquad \eps^2\Delta U_i=\rho_{f_i}-1.
$$
To prove Theorem~\ref{thm:Loeper}, we consider $\pi_0$ an optimal $W_2$-coupling between $f_{1}(0)$ and $f_{2}(0)$,
and we define the quantity $Q(t)$ defined as the unique constant (assuming it exists) such that 
$$
Q(t)= \frac{1}{2}\int_{(\bt^d\times\br^d)^2} \[\eps^{-2}|\log Q(t)| \,|X_1(t,x,v)-X_2(t,y,w)|^2+|V_1(t,x,v)-V_2(t,y,w)|^2\]d\pi_0(x,v,y,w).
$$
In other words, we are considering a quantity of the form 
$$
Q(t)= \frac{1}{2}\int_{(\bt^d\times\br^d)^2} \[\lambda(t)|X_1(t,x,v)-X_2(t,y,w)|^2+|V_1(t,x,v)-V_2(t,y,w)|^2\]d\pi_0(x,v,y,w),
$$
with $\lambda(t)$ depending on time, and we are assuming that actually $\lambda(t)$ is a function of $Q(t)$ itself.
The particular choice $\lambda(t)=\eps^{-2}|\log Q(t)|$ is specific to this problem: the logarithm will help to compensate for the log-Lipschitz regularity
of the electric fields, while $\eps^{-2}$ is the natural scaling in the current setting.

Note that a priori is not clear that $Q(t)$ is well-defined. This will be proved in Lemma~\ref{lem:well def} below.
However, assuming for now that $Q(t)$ is well-defined, we show how this quantity allows us to prove the result.
We have 
\begin{align}
Q'(t)&=\frac{1}{2}\int_{(\bt^d\times\br^d)^2}\lambda'(t)|X_1(t,x,v)-X_2(t,y,w)|^2\,d\pi_0(x,v,y,w)\\
&\qquad +\int_{(\bt^d\times\br^d)^2} \[\lambda(t)(X_1(t,x,v)-X_2(t,y,w))\cdot(V_1(t,x,v)-V_2(t,y,w)\]d\pi_0(x,v,y,w)\\
&\qquad -\int_{(\bt^d\times\br^d)^2}\[(V_1(t,x,v)-V_2(t,y,w)\cdot(E_1(t, X_1(t,x,v))-E_2(t, X_2(t,y,w)))\]d\pi_0(x,v,y,w)\\
\end{align}
By Cauchy-Schwartz inequality and recalling the definition of $Q(t)$ we have:
\begin{align}
Q'(t)&\le\frac{1}{2}\lambda'(t)\int_{(\bt^d\times\br^d)^2}|X_1(t,x,v)-X_2(t,y,w)|^2\,d\pi_0(x,v,y,w)\\
&\qquad +\lambda(t)\(\int_{(\bt^d\times\br^d)^2}|X_1(t,x,v)-X_2(t,y,w)|^2\,d\pi_0(x,v,y,w)\)^{\frac{1}{2}}\cdot\\
&\qquad\qquad\qquad \cdot \(\int_{(\bt^d\times\br^d)^2}|V_1(t,x,v)-V_2(t,y,w)|^2\,d\pi_0(x,v,y,w)\)^{\frac{1}{2}}\\
&\qquad +\(\int_{(\bt^d\times\br^d)^2}|V_1(t,x,v)-V_2(t,y,w)|^2\,d\pi_0(x,v,y,w)\)^{\frac{1}{2}}\cdot\\
&\qquad\qquad\qquad\cdot \(\int_{(\bt^d\times\br^d)^2}|E_1(t, X_1(t,x,v))-E_2(t, X_2(t,y,w))|^2\,d\pi_0(x,v,y,w)\)^{\frac{1}{2}}\\
&\le \frac{1}{2}\lambda'(t)\int_{(\bt^d\times\br^d)^2}|X_1(t,x,v)-X_2(t,y,w)|^2\,d\pi_0(x,v,y,w)\\
&+2\sqrt{\lambda(t)}Q(t)+\sqrt{ Q(t)}\| E_1(t, X_1(t,x,v))-E_2(t, X_2(t,y,w))\|_{L^2(d\pi_0(x,v,y,w))}.
\end{align}
Adding and subtracting $-E_2(t,X_1)$ we obtain:
\begin{align}\label{eq:Q'}
Q'(t)&\le \frac{1}{2}\lambda'(t)\int_{(\bt^d\times\br^d)^2}|X_1(t,x,v)-X_2(t,y,w)|^2\,d\pi_0(x,v,y,w)\\
&+2\sqrt{\lambda(t)}Q(t)
+\sqrt{Q(t)}\(T_1+T_2\)
\end{align}
where
$$
T_1=\| E_2(t, X_1(t,x,v))-E_2(t, X_2(t,y,w))\|_{L^2(d\pi_0(x,v,y,w))},$$
$$
 T_2=\| E_1(t, X_1(t,x,v))-E_2(t,X_1(t,x,v))\|_{L^2(d\pi_0(x,v,y,w))}.
$$
Thanks to Lemma~\ref{lem:Loeper} and by the very same argument in \cite{HKI2} we can bound $T_1$ and $T_2$ as follows: \footnote{Note that, since $\rho_i\ge0$ and $\|\rho_i(\cdot, t) \|_{L^\infty(\mathbb{T}^d)}\ge 1,$ then $\|\rho_i(\cdot, t)-1 \|_{L^\infty(\mathbb{T}^d)}\le \|\rho_i(\cdot, t) \|_{L^\infty(\mathbb{T}^d)}\le A(t)$ for $i=1,2.$}
$$
T_2\le \frac{C}{\e^2} A(t)\sqrt{\frac{Q(t)}{\lambda(t)}},
\qquad\text{and}\qquad
T_1\le \frac{C}{\e^2}A(t) \sqrt{\phi\(\frac{Q(t)}{\lambda(t)}\)}
$$
where we have 
\begin{equation}
\phi(s)=\left\{
\begin{array}{ll}
 s\log^2(s) & \mbox{for}\, s\in(0,1/e]\\
s& \mbox{for}\, s>1/e
\end{array}
\right.
\end{equation}
We now recall that $\lambda(t)=\e^{-2}|\log(Q(t))|$ and we substitute this expression in the derivative of $Q(t).$
Notice that in this estimate we are interested in small values of $Q(t)$ and in particular, as we will show below, we will always be in the regime $\eps^2 Q(t)/|\log(Q(t))|\in (0,1/e).$
Therefore we have
$$
T_1\le \frac{C}{\e^2}A(t)\sqrt{\frac{\e^2\,Q(t)}{|\log(Q(t))|}\log^2 \(\frac{\e^2\,Q(t)}{|\log(Q(t))|}\)},
$$
%
so by equation~\eqref{eq:Q'} we have
\begin{multline*}
Q'(t)\le \(-\frac{1}{2}\frac{Q'(t)}{|\log (Q(t))|}\int_{(\bt^d\times\br^d)^2}|X_1(t,x,v)-X_2(t,y,w)|^2\,d\pi_0(x,v,y,w)\)\\
+\(2\frac{\sqrt{|\log(Q(t))|}}{\eps}+\frac{C\,A(t)}{\e \sqrt{|\log(Q(t))|}}\)Q(t)+C\,A(t)\frac{\sqrt{Q(t)}}{\e}\sqrt{\frac{Q(t)}{|\log(Q(t))|}\log^2 \(\frac{\eps^2 \,Q(t)}{|\log(Q(t))|}\)}.
\end{multline*}
We now consider two cases, depending on the sign of $Q'(t)$. If $Q'(t)\leq 0$, then we do not do anything. If instead $Q'(t)>0$, then the first term in the right-hand side above is negative, and therefore
\begin{align}
Q'(t)\le \(2\frac{\sqrt{|\log(Q(t))|}}{\eps}+\frac{C\,A(t)}{\e \sqrt{|\log(Q(t))|}}\)Q(t)+C\,A(t)\frac{\sqrt{Q(t)}}{\e}\sqrt{\frac{Q(t)}{|\log(Q(t))|}\log^2 \(\frac{\eps^2 \,Q(t)}{|\log(Q(t))|}\)}.
\end{align}
Since the right-hand side above is nonnegative, independently of the sign of $Q'(t)$ we know that the bound above holds.
We now observe that as long as $Q(t)\leq \eps$ then 
$$
\log^2 \(\frac{\eps^2 \,Q(t)}{|\log(Q(t))|}\) \leq C\log^2(Q(t)).
$$
Thus, 
$$
Q'(t)\le \(2\frac{\sqrt{|\log(Q(t))|}}{\eps}+\frac{C\,A(t)}{\e \sqrt{|\log(Q(t))|}}+\frac{C\,A(t)\sqrt{|\log(Q(t))|}}{\e }\)Q(t) \qquad \text{provided $Q(t)\leq \eps$}.
$$
Since $A(t)\geq 1$ \footnote{We recall that $\int_{\mathbb{T}^d}\rho(x,t)dx=1,$ that implies $\|\rho(\cdot, t) \|_{L^\infty(\mathbb{T}^d)}\ge 1.$ Therefore $A(t)\ge1.$}, provided $|\log(Q(t))|\geq 1$, the above bound reduces to
$$
Q'(t)\le \frac{2C_d\,A(t)}{\e }Q(t)\sqrt{|\log(Q(t))|} 
$$
where $C_d$
 is a dimensional constant.
 Note that the two conditions $Q(t)\leq \eps$ and $|\log(Q(t))|\geq 1$ are guaranteed if $Q(t)\leq \frac{\eps}{e}$ (recall that $\e\leq 1$ by assumption).
 
Hence, provided that we are in the regime $Q(s)\leq \frac{\eps}{e}$ on $[0,t]$, this implies
\be
\label{eq:main}
Q(t) \leq R(t):=e^{-\left(\sqrt{|\log(Q(0))|} - \frac{C_d}{\eps}\int_0^tA(s)\,ds\right)^2}.
\ee
We observe that the bound \eqref{eq:main} guarantees that $$\sup_{s\in[0,t]}Q(s)\leq \frac{\eps}{e}\quad\text{ holds if } \quad \sup_{s \in [0,t]}R(s)\leq \frac{\eps}e.$$
In particular, \eqref{eq:main} holds if 
\be
\label{eq:provided}
\sqrt{|\log(Q(0))|}\geq \frac{C_d}{\eps} \int_0^tA(s)\,ds+\sqrt{\left|\log\left(\frac{\eps}e\right)\right|}
\ee
We now compare the quantity $Q$ to the Wasserstein distance.
First of all, since $Q(t)\leq \frac{\eps}{e}$ then $\eps^{-2}|\log(Q(t))| \geq 1$, therefore
\begin{align}
\frac12 W_2(f_1(t),f_2(t))^2&\leq \frac{1}{2}\int_{(\bt^d\times\br^d)^2} \[ |X_1(t,x,v)-X_2(t,y,w)|^2+|V_1(t,x,v)-V_2(t,y,w)|^2\]d\pi_0(x,v,y,w) \\
&\leq Q(t).
\end{align}
On the other hand, since $\eps^{-2}|\log(Q(0))| \geq 1$ and $\pi_0$ is an optimal plan,
$$
Q(0)\leq \frac{1}{2}\eps^{-2}|\log(Q(0))| \int_{(\bt^d\times\br^d)^2} \[|x-y|^2+|v-w|^2\]d\pi_0(x,v,y,w)=\frac{1}{2}\eps^{-2}|\log(Q(0))|W_2(f_1(0),f_2(0))^2,
$$
or equivalently
$$
\frac{Q(0)}{|\log(Q(0))|}\leq \frac{1}2\eps^{-2}W_2(f_1(0),f_2(0))^2,
$$
We now observe that, near the origin, the inverse of the function $s\mapsto \frac{s}{|\log s|}$ behaves like $\tau\mapsto \tau |\log \tau|$. In particular, there exists a universal small constant $c_0>0$ such that
$$
\frac{s}{|\log s|}\leq \tau \qquad \text{for some }0 \leq \tau \leq c_0 \qquad \Rightarrow \qquad s \leq 2 \tau|\log \tau|.
$$
Hence, if $ \frac{1}2\eps^{-2}W_2(f_1(0),f_2(0))^2 \leq c_0$, we deduce that 
$$
Q(0)\leq \eps^{-2}W_2(f_1(0),f_2(0))^2 \left| \log \(\frac{1}2\eps^{-2}W_2(f_1(0),f_2(0))^2\)\right|.
$$
Combining these bounds with \eqref{eq:main}, and recalling \eqref{eq:provided}, this implies
$$
W_2(f_1(t),f_2(t))^2 \leq 2 e^{-\left(\sqrt{\left|\log\left( \eps^{-2}W_2(f_1(0),f_2(0))^2 \left| \log \(\frac{1}2\eps^{-2}W_2(f_1(0),f_2(0))^2\)\right|\right)\right|} - \frac{C_d}{\eps}\int_0^tA(s)\,ds\right)^2} 
$$
provided $ \frac{1}2\eps^{-2}W_2(f_1(0),f_2(0))^2 \leq c_0$ and
$$
\sqrt{\left|\log\left( \eps^{-2}W_2(f_1(0),f_2(0))^2 \left| \log \(\frac{1}2\eps^{-2}W_2(f_1(0),f_2(0))^2\)\right|\right)\right|} \geq \frac{C_d}{\eps}\int_0^tA(s)\,ds+\sqrt{\left|\log\left(\frac{\eps}e\right)\right|}.
$$

Finally, to complete the proof, we show the following:
\begin{lemma} 
\label{lem:well def}
With the notation and assumptions of the theorem, the quantity $Q(t)$ is well defined and it is locally Lipschitz continuous where $Q(t)>0$.
In particular it is differentiable a.e.
\end{lemma}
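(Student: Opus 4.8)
The plan is to reduce the statement to an elementary fact about a scalar equation whose two coefficients depend in a regular way on $t$. Set
$$
a(t):=\tfrac12\int_{(\bt^d\times\br^d)^2}|X_1(t,x,v)-X_2(t,y,w)|^2\,d\pi_0,\qquad
b(t):=\tfrac12\int_{(\bt^d\times\br^d)^2}|V_1(t,x,v)-V_2(t,y,w)|^2\,d\pi_0,
$$
so that, by definition, $Q(t)$ is a value $q\in(0,1)$ solving $q=\eps^{-2}|\log q|\,a(t)+b(t)$. Since in the main argument $Q$ is used only on the time interval where $Q(t)\le\eps/e<1$, and there $b(t)\le Q(t)$, it suffices to establish well-definedness at every $t$ with $b(t)<1$, a condition which the main proof verifies throughout the relevant interval.

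First I would check that $a$ and $b$ are bounded and locally Lipschitz on $[0,T]$ (locally absolutely continuous, if one only assumes $A\in L^1([0,T])$). On $\bt^d$ one has $|X_1-X_2|\le 2\sqrt d$, so $a$ is bounded; and $\int|V_i(t,x,v)|^2\,d\pi_0=\int|v|^2 f_i(t)\,dx\,dv\le 2\mathcal{E}(f_i(t))$ is bounded on $[0,T]$ by the energy inequality, so $b$ is bounded. Moreover $\|E_i(t)\|_{L^\infty(\bt^d)}\lesssim\eps^{-2}\|\rho_i(t)-1\|_{L^\infty}$ is controlled by elliptic regularity and the log-Lipschitz estimate of Lemma~\ref{lem:Loeper}; hence for $\pi_0$-a.e.\ point $t\mapsto V_i(t,x,v)$ has time-derivative bounded by $\|E_i(t)\|_{L^\infty}$ and $t\mapsto X_i(t,x,v)$ has time-derivative bounded by $|V_i(t,x,v)|\le|v|+C$, uniformly over $[0,T]$ and dominated by a $\pi_0$-integrable function. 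Differentiating under the integral sign is then justified, and Cauchy--Schwarz gives $|a'(t)|\le 2\sqrt{a(t)b(t)}$ and $|b'(t)|\le 2\sqrt{b(t)}\bigl(\|E_1(t)\|_{L^\infty}+\|E_2(t)\|_{L^\infty}\bigr)$, integrable on $[0,T]$; so $a,b$ are locally Lipschitz (resp.\ locally absolutely continuous).

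For well-definedness I would, for each fixed $t$ with $b(t)<1$, study $\Phi(t,q):=q+\eps^{-2}a(t)\log q-b(t)$ on $q\in(0,1)$. Since $\partial_q\Phi=1+\eps^{-2}a(t)/q\ge 1>0$, the map $q\mapsto\Phi(t,q)$ is strictly increasing, so there is at most one root; and since $\Phi(t,\cdot)\to-\infty$ as $q\to0^+$ when $a(t)>0$, while $\Phi(t,\cdot)\to 1-b(t)>0$ as $q\to1^-$, the intermediate value theorem yields exactly one root $Q(t)\in(0,1)$ (when $a(t)=0$ the equation is just $q=b(t)$, so $Q(t)=b(t)$). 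The degenerate case $(a(t),b(t))=(0,0)$ would force, by integrating the characteristic ODEs backward, $\pi_0$ to be concentrated on the diagonal, i.e.\ $f_1\equiv f_2$; this is precisely the situation excluded by $Q(t)>0$. For the local Lipschitz regularity I would use a quantitative implicit-function argument: near any $t_0$ with $Q(t_0)>0$, first observe that $Q$ is bounded below on a neighborhood (if $Q(t_n)\to 0$ along $t_n\to t_0$ then $a(t_n)\to a(t_0)$, $b(t_n)\to b(t_0)$, and $Q(t_n)=\eps^{-2}|\log Q(t_n)|a(t_n)+b(t_n)$ forces $Q(t_n)\to+\infty$ when $a(t_0)>0$, or $Q(t_0)=b(t_0)=0$ when $a(t_0)=0$, both impossible); then on $\{q\ge\delta\}$ one has $\partial_q\Phi\ge 1$ and $\Phi$ jointly Lipschitz in $(t,q)$, so subtracting $\Phi(t_1,Q(t_1))=\Phi(t_2,Q(t_2))=0$ yields $|Q(t_1)-Q(t_2)|\le L\,|t_1-t_2|$. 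Equivalently, $Q$ is the composition of the locally Lipschitz map $t\mapsto(a(t),b(t))$ with the locally Lipschitz solution map of the scalar equation; being locally Lipschitz in one variable, it is differentiable a.e.

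The step I expect to be the main obstacle is the first one --- showing that the characteristic flow (generated, for weak solutions with merely bounded density, by a log-Lipschitz field) and the integrals $a,b$ are regular enough in $t$ to differentiate. The required inputs are the $L^\infty_x$ bound for the electric fields from elliptic regularity and Lemma~\ref{lem:Loeper}, together with the energy bound controlling the velocities in $L^2(\pi_0)$; with these in hand, the scalar-equation analysis and the implicit-function step are routine.
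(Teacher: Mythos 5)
Your proof is correct and follows essentially the same route as the paper: reduce to the scalar equation $q+\eps^{-2}a(t)\log q=b(t)$, use strict monotonicity in $q$ (plus the range of the map) to get a unique root in $(0,1)$ whenever $b(t)<1$, and obtain local Lipschitz continuity on $\{Q\ge\delta\}$ by an implicit-function-type estimate, with the regularity of $a,b$ coming from the characteristic flow exactly as in the paper's $D(t),E(t)$. The one point the paper spells out more carefully is why $E(t)<1$ holds on all of $[0,T]$ --- a short continuation argument (well-definedness for small times from $E(0)\ll1$, then $E\le Q\le$ the a priori bound as long as $Q$ exists) --- whereas your deferral to ``the main proof'' is mildly circular as written, since that proof presupposes the lemma; the fix is precisely this two-line bootstrap.
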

\begin{proof}
Set
$$
D(t):=\frac{1}{2}\int_{(\bt^d\times\br^d)^2} |X_1(t,x,v)-X_2(t,y,w)|^2 d\pi_0(x,v,y,w), $$
$$
 E(t):=\frac{1}{2}\int_{(\bt^d\times\br^d)^2}|V_1(t,x,v)-V_2(t,y,w)|^2 d\pi_0(x,v,y,w).
$$
We can assume that $D(t)$ and $E(t)$ are nonzero, otherwise we are in the ``degenerate'' situation where $f_1\equiv f_2$, in which case $Q(t)$ is trivially 0. Also, since $D(t)$ and $E(t)$ are written in terms of the characteristic flow, it is standard to check that they are differentiable. \footnote{There is no novelty here, as these are the quantities that appear also in \cite{Loep}, where Loeper computes their derivatives and show that they can be controlled in terms of $D(t)$ and $E(t)$ themselves. In particular, the quantities $D(t)$ and $E(t)$ are also uniformly Lipschitz.}

We note that the quantity $Q(t)$ is implicitly defined via the relation
\begin{equation}
\label{eq:QDE}
Q(t)= \eps^{-2}|\log Q(t)| D(t)+E(t),
\end{equation}
or equivalently, for each fixed time $t$, $Q(t)$ is the solution of the equation
$$
F(q, D(t), E(t))=0 \qquad \text{with } F(q, r , s):=q+\eps^{-2} \log q\,r-s\qquad \text{for } q \in (0,1).
$$
Since the function $q\mapsto q+\eps^{-2}\log q\, D(t)$ is strictly increasing on $(0,1) $ and its image covers the interval $(0,1)$,
we deduce that the equation above has a unique solution provided $E(t)< 1$. 
Hence, this proves that $Q(t) \in (0,1)$ is well defined provided $E(t)< 1$. 
In addition, thanks to the implicit function theorem applied to the function $F\in C^1_{\rm loc}((0,1)\times \R \times \R)$, we deduce the existence of a $C^1_{\rm loc}$ function $G$ such that $Q(t)=G(D(t),E(t))$.
Now, differentiating the relation \eqref{eq:QDE} with respect to $t$ we obtain
$$
Q'(t)\biggl(1+\eps^{-2}\frac{D(t)}{Q(t)}\biggr)=\eps^{-2}|\log Q(t)| D'(t)+E'(t).
$$
Hence, since $D$ and $E$ are uniformly bounded and Lipschitz, for any $\delta >0$ we deduce that
$$
|Q'(t)| \leq \frac{\eps^{-2}|\log Q(t)|\,| D'(t)|+|E'(t)|}{1+\eps^{-2}\frac{D(t)}{Q(t)}} \leq C_\delta \qquad \text{where }Q(t)>\delta.
$$
This proves that, for any $\delta>0$, the function  $t\mapsto Q(t)$ is uniformly Lipschitz continuous inside the set $\{Q(t)>\delta\}$. This proves that  $Q(t)$ is locally Lipschitz continuous inside the region $\{Q(t)>0\}$.

So, to conclude the proof, we need to ensure that  $E(t)< 1$.
Note that, since by assumption $E(0)\leq\frac{1}{2} W_2(f_1(0),f_2(0))^2 \ll 1$, by continuity we have that $E(t)<1$ for $t>0$ small. So $Q(t)$ is well defined for $t>0$ small.
Also, as long as $Q(t)$ is well defined, we have that $Q(t)\geq E(t)$. Hence, as long as $Q(t)$ is well defined, we have that 
$$
E(t)\leq Q(t) \leq e^{-\left(\sqrt{|\log(Q(0))|} - \frac{C_d}{\eps}\int_0^tA(s)\,ds\right)^2}.
$$
Since, by our smallness assumption on $W_2(f_1(0),f_2(0))$, the right hand side above remain small on $[0,T]$, the bound above guarantees that $E(t)\ll 1$ for all $t \in [0,T]$.
This proves that $Q(t)$ is well-defined on $[0,T]$, which concludes the proof. 
\end{proof}
\begin{remark}
\label{rmk:Phi}
In the previous proof we considered $\lambda(t)=\eps^{-2}|\log(Q(t))|$ and in Lemma~\ref{lem:well def} we proved that $Q(t)$ is well-defined provided it is small enough. This restriction is due to the fact that the function $\R^+ \ni s\mapsto \eps^{-2}|\log s|$ is decreasing only for $s\in(0,1).$ An alternative choice could have been to define 
$$
\Phi(s)=\left\{
\begin{array}{ll}
|\log s|& \mbox{for}\, s\in(0,1/e]\\
e^{-1}s^{-1}& \mbox{for}\, s>1/e,
\end{array}
\right.
$$
and $\lambda(t):= \eps^{-2}\Phi(Q(t)).$ With this choice, since $\R^+ \ni s\mapsto \Phi(s)$ is decreasing and of class $C^1$, one can define $Q(t)$ as the unique solution of 
$$
F(q, D(t), E(t))=0 \qquad \text{with } F(q, r , s):=q+ \eps^{-2}\Phi(q)\,r-s\qquad \text{for } q \in (0,\infty).
$$
With this definition, the proof of Lemma~\ref{lem:well def} shows that $Q(t)$ is always well defined (without any restriction on the size of $E(t)$), and it is locally Lipschitz continuous where $Q(t)>0$.

Since in our setting we are interested in the case $Q(t)\ll 1,$ there is no advantage in using this latter definition of $\lambda.$ However this observation could be useful in other situations, see also Section~\ref{sect:summary} below.
\end{remark}

\bigskip

\section{Summary, generalizations, and perspectives}
\label{sect:summary}

As we have seen in the last two sections, suitably modifying Wasserstein distances can be particularly useful in a kinetic setting to take advantage of the asymmetry between $x$ and $v$.
More precisely, let $\mathcal{X}=\mathbb T^d$ or $\mathcal{X}=\mathbb R^d$, and let $\mu$ and $\nu$ be two probability measures on $\mathcal{X}\times\mathbb{R}^d$.
Also, let $\Pi(\mu,\nu)$ denote the collection of all measures on $(\mathcal{X} \times \mathbb{R}^d)^2$ with marginals $\mu$ and $\nu$ on the first and second factors respectively.

The first natural generalization, given $p\geq 1$ and $\lambda\in \mathbb{R}^+$, is to consider
$$
\W_{\lambda,\, p} (\mu, \nu):=\left( \inf_{\pi \in \Pi(\mu,\nu)} \int_{(\mathcal{X} \times \mathbb{R}^d)^2} \(\lambda|x-y|^{p} +|v-w|^p\)\mathrm{d} \pi (x,v,y,w) \right)^{1/p},
$$
as done in \cite{GPI-SIAM2018,GPI20, Laz}.

An alternative way, introduced in \cite{Salem2021} for $p=2$, would be to consider three parameters $a,b,c>0$ such that $\sqrt{ac}>b$ and define
$$
\W_{a,b,c,\, p} (\mu, \nu):=\left( \inf_{\pi \in \Pi(\mu,\nu)} \int_{(\mathcal{X} \times \mathbb{R}^d)^2} \(a |x-y|^{2} +2b(x-y)\cdot (v-w)+c|v-w|^2\)^{p/2}\mathrm{d} \pi (x,v,y,w) \right)^{1/p},
$$

In this paper, we have introduced two different generalizations.
\begin{itemize}
\item[(i)] First, we considered the nonlinear version of the $\W_{\lambda,\, p} (\mu, \nu)$ by choosing $\lambda$ depending on the distance itself.
We defined this along a flow, but that can be also be defined in a general setting as follows:\\
given $p\geq 1$ and a decreasing function $\Phi:\R^+\to \R^+$, for every $\pi \in \Pi(\mu,\nu)$ and $\lambda$ we define $D_p(\pi,\Phi)$ as the unique number $s$ such that
$$
s- \Phi(s)\int_{(\mathcal{X} \times \mathbb{R}^d)^2} |x-y|^{p}\mathrm{d} \pi (x,v,y,w) = \int_{(\mathcal{X} \times \mathbb{R}^d)^2} |v-w|^p\mathrm{d} \pi (x,v,y,w)
$$
(arguing as in the proof of Lemma~\ref{lem:well def} it is easy to check that $D(\pi,\Phi)$ is well defined, see also Remark~\ref{rmk:Phi}).
Then, we set
$$
\W_{\Phi,p}(\mu,\nu):=\left( \inf_{\pi \in \Pi(\mu,\nu)} D_p(\pi,\Phi) \right)^{1/p}.
$$
This definition with $\Phi(s)=\eps^{-2}|\log s|$ for $s \in (0,1/e)$ and $p=2$ essentially corresponds to the quantity used in the proof of Theorem~\ref{thm:Loeper}, although there we considered the quantity $D(t)$ where we did not take the infimum over couplings $\pi \in \Pi(\mu,\nu),$ since it was not needed for our purpose.
\item[(ii)]
In a different direction, we modified the $W_1$ distance by introducing a shift in position. Note that this second quantity cannot be defined as a ``static'' distance since the shift $x -tv$ depends on the time $t$. Hence, one can generalize it only as a time dependent quantity as follows:
$$
\widetilde \W_{t,\, p} (\mu, \nu):=\left( \inf_{\pi \in \Pi(\mu,\nu)} \int_{(\mathcal{X} \times \mathbb{R}^d)^2} \big(|(x-tv)-(y-tw)|^{p} +|v-w|^p\big)\mathrm{d} \pi (x,v,y,w) \right)^{1/p}.
$$
\end{itemize}
Of course, these approaches can be further combined by mixing the different quantities defined above.
Note that there is no universal ``best'' choice, and each problem requires its adaptation. Still, we believe, as this paper shows, that this approach can lead to an improvement to several existing results, as well as to prove new estimates.
In addition, the approach is very general and can be useful in any situation where there is an asymmetry between the variables involved.

To mention some concrete applications, our ideas could also be applied in the setting of quantum systems by suitably modifying the quantum Wasserstein distances introduced in \cite{GolseMouhotPaul2016, GolsePaul2019}.
Also, our new Loeper-type estimate may be helpful to obtain stability estimates in $W_2$ when the density belongs to some suitable Orlicz spaces, in analogy to \cite{HoldingMiot2018} where stability estimates have been proved for $W_1$.

\section*{Acknowledgments}
We are grateful to Megan Griffin-Pickering and Evelyne Miot for their valuable comments on a preliminary version of this paper. We also thank the anonymous referees for their useful comments and observations.


\begin{thebibliography}{10}

\bibitem{AmbrosioGigliSavareBOOK}
L.~Ambrosio, N.~Gigli, and G.~Savar\'{e}.
\newblock {\em Gradient flows in metric spaces and in the space of probability
  measures}.
\newblock Lectures in Mathematics ETH Z\"{u}rich. Birkh\"{a}user Verlag, Basel,
  2005.

\bibitem{Arsenev}
A.~Arsenev.
\newblock Existence in the large of a weak solution to the {Vlasov} system of
  equations.
\newblock {\em Zh. Vychisl. Mat. i Mat. Fiz.}, 15:136--147, 1975.

\bibitem{BD85}
C.~Bardos and P.~Degond.
\newblock Existence globale des solutions des \'{e}quations de
  {V}lasov-{P}oisson.
\newblock In {\em Nonlinear partial differential equations and their
  applications. {C}oll\`ege de {F}rance seminar, {V}ol. {VII} ({P}aris,
  1983--1984)}, volume 122 of {\em Res. Notes in Math.}, pages 1--3, 35--58.
  Pitman, Boston, MA, 1985.

\bibitem{Bardos-Degond}
C.~Bardos and P.~Degond.
\newblock Global existence for the {Vlasov--Poisson} equation in 3 space
  variables with small initial data.
\newblock {\em Ann. Inst. H. Poincar{\'{e}} Anal. Non Lin{\'{e}}aire},
  2(2):101--118, 1985.

\bibitem{BDG86}
C.~Bardos, P.~Degond, and F.~Golse.
\newblock A priori estimates and existence results for the {V}lasov and
  {B}oltzmann equations.
\newblock In {\em Nonlinear systems of partial differential equations in
  applied mathematics, {P}art 2 ({S}anta {F}e, {N}.{M}., 1984)}, volume~23 of
  {\em Lectures in Appl. Math.}, pages 189--207. Amer. Math. Soc., Providence,
  RI, 1986.

\bibitem{BR}
J.~Batt and G.~Rein.
\newblock {Global classical solutions of the periodic Vlasov-Poisson system in
  three dimensions}.
\newblock {\em C. R. Acad. Sci. Paris S{\'{e}}r. I Math.}, 313(6):411--416,
  1991.

\bibitem{BertozziLaurentRosado2011}
A.~L. Bertozzi, T.~Laurent, and J.~Rosado.
\newblock {$L^p$} theory for the multidimensional aggregation equation.
\newblock {\em Comm. Pure Appl. Math.}, 64(1):45--83, 2011.


\bibitem{Brenier89}
Y.~Brenier. 
\newblock
Une formulation de type Vlasov-Poisson pour les \'equations d'Euler des fluides parfaits incompressibles. 
\newblock Rapport de recherche, RR-1070, INRIA, 1989.

\bibitem{Br00}
Y.~Brenier.
\newblock Convergence of the {V}lasov-{P}oisson system to the incompressible
  {E}uler equations.
\newblock {\em Comm. Partial Differential Equations}, 25(3-4):737--754, 2000.

\bibitem{CanizoCarrilloRosado2011}
J.~A. Ca\~{n}izo, J.~A. Carrillo, and J.~Rosado.
\newblock A well-posedness theory in measures for some kinetic models of
  collective motion.
\newblock {\em Math. Models Methods Appl. Sci.}, 21(3):515--539, 2011.

\bibitem{CagliotiRousset2008}
E.~Caglioti and F.~Rousset.
\newblock Long time estimates in the mean field limit.
\newblock {\em Arch. Ration. Mech. Anal.}, 190(3):517--547, 2008.

\bibitem{CarrilloChoiHauray2014}
J.~A. Carrillo, Y.-P. Choi, and M.~Hauray.
\newblock The derivation of swarming models: mean-field limit and {W}asserstein
  distances.
\newblock In {\em Collective dynamics from bacteria to crowds}, volume 553 of
  {\em CISM Courses and Lect.}, pages 1--46. Springer, Vienna, 2014.

\bibitem{CarrilloChoiSalem2019}
J.~A. Carrillo, Y.-P. Choi, and S.~Salem.
\newblock Propagation of chaos for the {V}lasov-{P}oisson-{F}okker-{P}lanck
  equation with a polynomial cut-off.
\newblock {\em Commun. Contemp. Math.}, 21(4):1850039, 28, 2019.

\bibitem{CarrilloRosado2010}
J.~A. Carrillo and J.~Rosado.
\newblock Uniqueness of bounded solutions to aggregation equations by optimal
  transport methods.
\newblock In {\em European {C}ongress of {M}athematics}, pages 3--16. Eur.
  Math. Soc., Z\"{u}rich, 2010.

\bibitem{DeBievreGoudonVavasseur2016}
S.~De~Bi\`evre, T.~Goudon, and A.~Vavasseur.
\newblock Particles interacting with a vibrating medium: existence of solutions
  and convergence to the {V}lasov-{P}oisson system.
\newblock {\em SIAM J. Math. Anal.}, 48(6):3984--4020, 2016.

\bibitem{Dob}
R.~L. Dobrushin.
\newblock {Vlasov} equations.
\newblock {\em Funktsional Anal.i Prilozhen.}, 13:48--58, 1979.

\bibitem{DolbeaultNazaretSavare2009}
J.~Dolbeault, B.~Nazaret, and G.~Savar\'{e}.
\newblock A new class of transport distances between measures.
\newblock {\em Calc. Var. Partial Differential Equations}, 34(2):193--231,
  2009.

\bibitem{Xavipatit2018}
X.~Fern\'{a}ndez-Real.
\newblock The {L}agrangian structure of the {V}lasov-{P}oisson system in
  domains with specular reflection.
\newblock {\em Comm. Math. Phys.}, 364(3):1327--1406, 2018.

\bibitem{FigalliGigli2010}
A.~Figalli and N.~Gigli.
\newblock A new transportation distance between non-negative measures, with
  applications to gradients flows with {D}irichlet boundary conditions.
\newblock {\em J. Math. Pures Appl. (9)}, 94(2):107--130, 2010.

\bibitem{GolseReview2016}
F.~Golse.
\newblock On the dynamics of large particle systems in the mean field limit.
\newblock In {\em Macroscopic and large scale phenomena: coarse graining, mean
  field limits and ergodicity}, volume~3 of {\em Lect. Notes Appl. Math.
  Mech.}, pages 1--144. Springer, [Cham], 2016.

\bibitem{GolseMouhotPaul2016}
F.~Golse, C.~Mouhot, and T.~Paul.
\newblock On the mean field and classical limits of quantum mechanics.
\newblock {\em Comm. Math. Phys.}, 343(1):165--205, 2016.

\bibitem{GolseMouhotRicci2013}
F.~Golse, C.~Mouhot, and V.~Ricci.
\newblock Empirical measures and {V}lasov hierarchies.
\newblock {\em Kinet. Relat. Models}, 6(4):919--943, 2013.

\bibitem{GolsePaul2017}
F.~Golse and T.~Paul.
\newblock The {S}chr\"{o}dinger equation in the mean-field and semiclassical
  regime.
\newblock {\em Arch. Ration. Mech. Anal.}, 223(1):57--94, 2017.

\bibitem{GolsePaul2019}
F.~Golse and T.~Paul.
\newblock Empirical measures and quantum mechanics: applications to the
  mean-field limit.
\newblock {\em Comm. Math. Phys.}, 369(3):1021--1053, 2019.

\bibitem{GSR}
F.~Golse and L.~Saint-Raymond.
\newblock The {V}lasov-{P}oisson system with strong magnetic field in
  quasi-neutral regime.
\newblock {\em Math. Models Methods Appl. Sci.}, 13(5):661--714, 2003.


\bibitem{Grenier96}
E.~Grenier.
\newblock {Oscillations in quasi-neutral plasmas}.
\newblock {\em Comm. Partial Differential Equations}, 21(3-4):363--394, 1996.

\bibitem{Grenier99}
E.~Grenier.
\newblock Limite quasineutre en dimension 1.
\newblock In {\em Journ\'{e}es ``\'{E}quations aux {D}\'{e}riv\'{e}es
  {P}artielles'' ({S}aint-{J}ean-de-{M}onts, 1999)}, pages Exp. No. II, 8.
  Univ. Nantes, Nantes, 1999.

\bibitem{GPI-SIAM2018}
M.~Griffin-Pickering and M.~Iacobelli.
\newblock A mean field approach to the quasi-neutral limit for the
  {Vlasov-Poisson} equation.
\newblock {\em SIAM J. Math. Anal.}, 50(5):5502--5536, 2018.

\bibitem{GPI-WP}
M.~Griffin-Pickering and M.~Iacobelli.
\newblock Global well-posedness for the {Vlasov-Poisson} system with massless
  electrons in the 3-dimensional torus.
\newblock Preprint, 2020.

\bibitem{GPI20}
M.~Griffin-Pickering and M.~Iacobelli.
\newblock Singular limits for plasmas with thermalised electrons.
\newblock {\em Journal de Math{\'{e}}matiques Pures et Appliqu{\'{e}}es},
  135:199 -- 255, 2020.

\bibitem{GPI-proc-qn}
M.~Griffin-Pickering and M.~Iacobelli.
\newblock Recent developments on quasi-neutral limits for {Vlasov}-type
  equations.
\newblock {\em Recent advances in kinetic equations and applications, Springer
  INdAM Series.}, 2021.
\newblock Preprint.

\bibitem{GPI-proc-wp}
M.~Griffin-Pickering and M.~Iacobelli.
\newblock Recent developments on the well-posedness theory for {Vlasov}-type
  equations.
\newblock {\em Proceedings of the conference Particle Systems and Partial
  Differential Equations editions VI, VII and VIII.}, 2021.
\newblock Preprint.

\bibitem{HKH}
D.~Han-Kwan and M.~Hauray.
\newblock Stability issues in the quasi-neutral limit of the one-dimensional
  {Vlasov-Poisson} equation.
\newblock {\em Comm. Math. Phys.}, 334(2):1101--1152, 2015.

\bibitem{HKI2}
D.~Han-Kwan and M.~Iacobelli.
\newblock quasi-neutral limit for {Vlasov-Poisson} via {Wasserstein} stability
  estimates in higher dimension.
\newblock {\em J. Differential Equations}, 263(1):1--25, 2017.

\bibitem{HKI1}
D.~Han-Kwan and M.~Iacobelli.
\newblock {The quasi-neutral limit of the Vlasov-Poisson equation in Wasserstein
  metric}.
\newblock {\em Commun. Math. Sci.}, 15(2):481--509, 2017.

\bibitem{Han-KwanMiotMoussaMoyano}
D.~Han-Kwan, E.~Miot, A.~Moussa, and I.~Moyano.
\newblock Uniqueness of the solution to the 2{D} {V}lasov-{N}avier-{S}tokes
  system.
\newblock {\em Rev. Mat. Iberoam.}, 36(1):37--60, 2020.

\bibitem{HaurayWass2009}
M.~Hauray.
\newblock Wasserstein distances for vortices approximation of {E}uler-type
  equations.
\newblock {\em Math. Models Methods Appl. Sci.}, 19(8):1357--1384, 2009.

\bibitem{HoldingMiot2018}
T.~Holding and E.~Miot.
\newblock Uniqueness and stability for the {V}lasov-{P}oisson system with
  spatial density in {O}rlicz spaces.
\newblock In {\em Mathematical analysis in fluid mechanics---selected recent
  results}, volume 710 of {\em Contemp. Math.}, pages 145--162. Amer. Math.
  Soc., Providence, RI, 2018.

\bibitem{Horst-Hunze}
E.~Horst and R.~Hunze.
\newblock Weak solutions of the initial value problem for the unmodified
  non-linear {Vlasov} equation.
\newblock {\em Math. Methods Appl. Sci.}, 6(2):262--279, 1984.

\bibitem{JabinReviewMFL}
P.-E. Jabin.
\newblock A review of the mean field limits for {V}lasov equations.
\newblock {\em Kinet. Relat. Models}, 7(4):661--711, 2014.

\bibitem{Jeans}
J.~H. Jeans.
\newblock On the theory of star-streaming and the structure of the universe.
\newblock {\em Monthly Notices of the Royal Astronomical Society}, 76:70--84,
  1915.

\bibitem{Lafleche2019}
L.~Lafleche.
\newblock Propagation of moments and semiclassical limit from {H}artree to
  {V}lasov equation.
\newblock {\em J. Stat. Phys.}, 177(1):20--60, 2019.

\bibitem{Laz}
D.~Lazarovici.
\newblock The {Vlasov-Poisson} dynamics as the mean field limit of extended
  charges.
\newblock {\em Comm. Math. Phys.}, 347(1):271--289, 2016.

\bibitem{LewinSabin2020}
M.~Lewin and J.~Sabin.
\newblock The {H}artree and {V}lasov equations at positive density.
\newblock {\em Comm. Partial Differential Equations}, 45(12):1702--1754, 2020.

\bibitem{LieroMielkeSavare2016}
M.~Liero, A.~Mielke, and G.~Savar\'{e}.
\newblock Optimal transport in competition with reaction: the
  {H}ellinger-{K}antorovich distance and geodesic curves.
\newblock {\em SIAM J. Math. Anal.}, 48(4):2869--2911, 2016.

\bibitem{LieroMielkeSavare2018}
M.~Liero, A.~Mielke, and G.~Savar\'{e}.
\newblock Optimal entropy-transport problems and a new
  {H}ellinger-{K}antorovich distance between positive measures.
\newblock {\em Invent. Math.}, 211(3):969--1117, 2018.

\bibitem{Lions-Perthame}
P.~L. Lions and B.~Perthame.
\newblock {Propagation of moments and regularity for the 3-dimensional
  Vlasov-Poisson system}.
\newblock {\em Invent. Math.}, 105(2):415--430, 1991.

\bibitem{LoeperSIMA2006}
G.~Loeper.
\newblock A fully nonlinear version of the incompressible {E}uler equations:
  the semigeostrophic system.
\newblock {\em SIAM J. Math. Anal.}, 38(3):795--823, 2006.

\bibitem{Loep}
G.~Loeper.
\newblock {Uniqueness of the solution to the Vlasov-Poisson system with bounded
  density}.
\newblock {\em J. Math. Pures Appl. (9)}, 86(1):68--79, 2006.
\bibitem{Mas}
N.~Masmoudi.
\newblock From {V}lasov-{P}oisson system to the incompressible {E}uler system.
\newblock {\em Comm. Partial Differential Equations}, 26(9-10):1913--1928,
  2001.
\bibitem{MiotCMP2016}
E.~Miot.
\newblock A uniqueness criterion for unbounded solutions to the
  {V}lasov-{P}oisson system.
\newblock {\em Comm. Math. Phys.}, 346(2):469--482, 2016.

\bibitem{Pallard}
C.~Pallard.
\newblock Moment propagation for weak solutions to the {Vlasov-Poisson} system.
\newblock {\em Comm. Partial Differential Equations}, 37(7):1273--1285, 2012.

\bibitem{Pfaffelmoser}
K.~Pfaffelmoser.
\newblock {Global classical solutions of the Vlasov-Poisson system in three
  dimensions for general initial data}.
\newblock {\em J. Differential Equations}, 95(2):281--303, 1992.

\bibitem{PiccoliRossi2014}
B.~Piccoli and F.~Rossi.
\newblock Generalized {W}asserstein distance and its application to transport
  equations with source.
\newblock {\em Arch. Ration. Mech. Anal.}, 211(1):335--358, 2014.

\bibitem{PiccoliRossi2016}
B.~Piccoli and F.~Rossi.
\newblock On properties of the generalized {W}asserstein distance.
\newblock {\em Arch. Ration. Mech. Anal.}, 222(3):1339--1365, 2016.

\bibitem{Salem2021}
S.~Salem.
\newblock An optimal transport approach of hypocoercivity for the 1d kinetic
  {Fokker-Plank} equation.
\newblock 2021.
\newblock Preprint.

\bibitem{Schaeffer}
J.~Schaeffer.
\newblock {Global existence of smooth solutions to the Vlasov-Poisson system in
  three dimensions}.
\newblock {\em Comm. Partial Differential Equations}, 16(8-9):1313--1335, 1991.

\bibitem{SerfatyVazquez2014}
S.~Serfaty and J.~L. V\'{a}zquez.
\newblock A mean field equation as limit of nonlinear diffusions with
  fractional {L}aplacian operators.
\newblock {\em Calc. Var. Partial Differential Equations}, 49(3-4):1091--1120,
  2014.

\bibitem{Ukai-Okabe}
S.~Ukai and T.~Okabe.
\newblock On classical solutions in the large in time of two-dimensional
  {Vlasov's} equation.
\newblock {\em Osaka J. Math.}, 15(2):245--261, 1978.

\bibitem{Vil09}
C.~Villani.
\newblock {\em Optimal transport}, volume 338 of {\em Grundlehren der
  Mathematischen Wissenschaften [Fundamental Principles of Mathematical
  Sciences]}.
\newblock Springer-Verlag, Berlin, 2009.
\newblock Old and new.

\bibitem{Vla1}
A.~Vlasov.
\newblock {\em Zh. Eskper. Teor. Fiz.}, 8:291, 1938.

\bibitem{Vla2}
A.~Vlasov.
\newblock {Vlasov} equation and plasma dispersion relation.
\newblock {\em J. Phys.(USSR)}, 9:25, 1945.

\end{thebibliography}
\end{document}